\theoremstyle{definition}
\newtheorem{mydef}{Definition}[section]
\newtheorem{lem}[mydef]{Lemma}
\newtheorem{thm}[mydef]{Theorem}
\newtheorem{cor}[mydef]{Corollary}
\newtheorem{defin}[mydef]{Definition}
\newtheorem{example}[mydef]{Example}
\newtheorem{remark}[mydef]{Remark}
\newtheorem{fact}[mydef]{Fact}
\newcommand{\fct}[2]{{}^{#1}#2}
\newcommand{\ba}{\bar{a}}
\newcommand{\bb}{\bar{b}}
\newcommand{\bigM}{\widehat{M}}
\newcommand{\bigN}{\widehat{N}}
\newcommand{\bigK}{\widehat{\K}}
\newcommand{\sbigK}{\widehat{K}}
\newcommand{\bigtau}{\hat{\tau}}
\newcommand{\dom}[1]{\text{dom}(#1)}
\newcommand{\seq}[1]{\langle #1 \rangle}
\newcommand{\rest}{\upharpoonright}
\newcommand{\id}{\text{id}}
\newcommand{\leap}[1]{\le_{#1}}
\newcommand{\ltap}[1]{<_{#1}}
\newcommand{\lta}{\ltap{\K}}
\newcommand{\lea}{\leap{\K}}
\newcommand{\K}{\mathbf{K}}
\newcommand{\C}{\mathcal{C}}
\newbox\noforkbox \newdimen\forklinewidth
\noforkbox\hbox{\lower 2pt\box1\lower
2pt\box0\relax}
\def\unionstick{\mathop{\copy\noforkbox}\limits}
\def\1nf{\unionstick^{(1)}}
\def\2nf{\unionstick^{(2)}}
\def\3nf{\unionstick^{(3)}}
\newcommand{\gtp}{\text{gtp}}
\newcommand{\gS}{\operatorname{gS}}
\newcommand{\gSna}{\gS^{\text{na}}}
\newcommand{\Ll}{\mathbb{L}}
\newcommand{\Eat}{E_{\text{at}}}
\newcommand{\cl}{\operatorname{cl}}
\newcommand{\preim}{\operatorname{preim}}
\newcommand{\LS}{\text{LS}}
\title[Quasiminimal AECs]{Quasiminimal abstract elementary classes}
\date{\today \\
AMS 2010 Subject Classification: Primary 03C48. Secondary: 03C45, 03C52, 03C55, 03C75.}
\keywords{Abstract elementary class; Quasiminimal pregeometry class; Pregeometry; Closure space; Exchange axiom; Homogeneity}
\author{Sebastien Vasey}
\address{Department of Mathematical Sciences, Carnegie Mellon University, Pittsburgh, Pennsylvania, USA}
\email{sebv@cmu.edu}
\urladdr{http://math.cmu.edu/\textasciitilde svasey/}
\begin{document}

\begin{abstract}
  We propose the notion of a quasiminimal abstract elementary class (AEC). This is an AEC satisfying four semantic conditions: countable Löwenheim-Skolem-Tarski number, existence of a prime model, closure under intersections, and uniqueness of the generic orbital type over every countable model. We exhibit a correspondence between Zilber's quasiminimal pregeometry classes and quasiminimal AECs: any quasiminimal pregeometry class induces a quasiminimal AEC (this was known), and for any quasiminimal AEC there is a natural functorial expansion that induces a quasiminimal pregeometry class. We show in particular that the exchange axiom is redundant in Zilber's definition of a quasiminimal pregeometry class.
\end{abstract}

\maketitle
\tableofcontents

\section{Introduction}

Quasiminimal pregeometry classes were introduced by Zilber \cite{zil05} in order to prove a categoricity theorem for pseudo-exponential fields. Quasiminimal pregeometry classes are a class of structures carrying a pregeometry satisfying several axioms. Roughly (see Definition \ref{quasimin-class-def}) the axioms specify that the countable structures are quite homogeneous and that the generic type over them is unique (where types here are syntactic quantifier-free types). The original axioms included an ``excellence'' condition, but it has since been shown \cite{quasimin-five} that this follows from the rest. Zilber showed that a quasiminimal pregeometry class has at most one model in every uncountable cardinal, and in fact the structures are determined by their dimension. Note that quasiminimal pregeometry classes are typically non-elementary (see \cite[\S5]{quasimin}): they are axiomatizable in $\Ll_{\omega_1, \omega} (Q)$ (where $Q$ is the quantifier ``there exists uncountably many'') but not even in $\Ll_{\omega_1, \omega}$.

The framework of abstract elementary classes (AECs) was introduced by Saharon Shelah \cite{sh88} and encompasses for example classes of models of an $\Ll_{\omega_1, \omega} (Q)$ theory. Therefore quasiminimal pregeometry classes can be naturally seen as AECs (see Theorem \ref{qm-aec}). In this paper, we show that a converse holds: there is a natural class of AECs, which we call the \emph{quasiminimal AECs}, that corresponds to quasiminimal pregeometry classes. Quasiminimal AECs are required to satisfy four purely semantic properties (see Definition \ref{quasimin-def}), the most important of  which are that the AEC must, in a technical sense, be closed under intersections (this is called ``admitting intersections'', see Definition \ref{intersec-def}) and over each countable model $M$ there must be a \emph{unique} orbital (Galois) type that is not realized inside $M$.

It is straightforward (and implicit e.g.\ in \cite[\S4]{quasimin}, see also \cite[2.87]{group-config-kangas-apal}) to see that any quasiminimal pregeometry class is a quasiminimal AEC, but here we prove a converse (Theorem \ref{aec-qm}). We have to solve two difficulties:

\begin{enumerate}
\item  The axioms of quasiminimal pregeometry classes are very syntactic because they are phrased in terms of quantifier-free types. For example, one of the axioms (II(\ref{ii-2}) in Definition \ref{quasimin-class-def}) specifies that the models must have some syntactic homogeneity.
\item Nothing in the definition of quasiminimal AECs says that the models must carry a pregeometry. It is not clear that the natural closure $\cl^M (A)$ given by the intersections of all the $\K$-substructures of $M$ containing $A$ satisfies exchange.
\end{enumerate}

To get around the first difficulty, we use an argument from \cite[\S5]{quasimin-five} together with the technique of adding relation symbols for small Galois types to the vocabulary (called the Galois Morleyization in \cite{sv-infinitary-stability-afml}). To get around the second difficulty, we develop new tools to prove the exchange axiom of pregeometries in any setup where we know that the other axioms of pregeometries hold. We show (Corollary \ref{pregeom-cor}) that any \emph{homogeneous} closure space satisfying the finite character axiom of pregeometries also satisfies the exchange axiom (to the best of our knowledge, this is new\footnote{Although related to the study of quasiminimal structures in Itai-Tsuboi-Wakai \cite{itai-tsuboi-wakai} and later Pillay-Tanović \cite{pillay-tanovic}, Corollary \ref{pregeom-cor} is different. It gives a stronger conclusion using stronger hypotheses, see Remark \ref{previous-work-rmk}.}). As a consequence, the exchange axiom is redundant in the definition of a quasiminimal pregeometry class (Corollary \ref{exchange-not-necessary})\footnote{Interestingly, exchange was initially not part of Zilber's definition of quasiminimal pregeometry classes (see \cite[\S5]{zilber-pseudoexp}) but was added later. Some sources claim that the axiom is necessary, see \cite[Remark 2.24]{baldwinbook09} or \cite[p.~554]{quasimin}, but this seems to be due to a related counterexample that does not fit in the framework of quasiminimal pregeometry classes (see the discussion in Remark \ref{previous-work-rmk}).}.

An immediate corollary of the correspondence between quasiminimal AECs and quasiminimal pregeometry classes is that a quasiminimal AEC has at most one model in every uncountable cardinal (Corollary \ref{final-cor}). This can be seen as a generalization of the fact that algebraically closed fields of a fixed characteristic are uncountably categorical (indeed, algebraically closed fields are closed under intersections and if $F$ is a field, $a, b$ are transcendental over $F$, then $a$ and $b$ satisfy the same type over $F$).

Throughout this paper, we assume some basic familiarity with AECs (see \cite{baldwinbook09}), although we repeat the basic definitions. We use the notation from \cite{sv-infinitary-stability-afml}. In particular, we use $|M|$ to denote the universe of a structure $M$, and $\|M\|$ for its cardinality.

This paper was written while working on a Ph.D.\ thesis under the direction of Rami Grossberg at Carnegie Mellon University and I would like to thank Professor Grossberg for his guidance and assistance in my research in general and in this work specifically. I also thank John Baldwin, Will Boney, Levon Haykazyan, Jonathan Kirby, and Boris Zilber for helpful feedback on an early draft of this paper. Finally, I thank several anonymous referees for comments that helped improve the paper.

\section{Exchange in homogeneous closure spaces}\label{sec-2}

Pregeometries are a fundamental tool in geometric stability theory \cite{elements-geometric, pillay-geometric-stability}. They occur in the study of strongly minimal sets (where algebraic closure induces a pregeometry) and their generalization, regular types (where forking induces a pregeometry). In this section, we study closure spaces, which are objects satisfying the monotonicity and transitivity axioms of pregeometries. We want to know whether they satisfy the exchange axiom when they are homogeneous. We give criteria for when this is the case (Corollary \ref{pregeom-cor}). To the best of our knowledge, this is new (but see Remark \ref{previous-work-rmk}).

The following definition is standard, see e.g.\ \cite{crapo-rota-geometries}.

\begin{defin}
  A closure space is a pair $W = (X, \cl)$, where:

  \begin{enumerate}
  \item $X$ is a set.
  \item $\cl : \mathcal{P} (X) \rightarrow \mathcal{P} (X)$ satisfies:

    \begin{enumerate}
    \item Monotonicity: For any $A \subseteq X$, $A \subseteq \cl (A)$.
    \item Transitivity: For any $A, B \subseteq X$, $A \subseteq \cl (B)$ implies $\cl (A) \subseteq \cl (B)$.
    \end{enumerate}
  \end{enumerate}

  We write $|W|$ for $X$ and $\cl^W$ for $\cl$ (but when $W$ is clear from context we might forget it). For $a \in A$, we will often write $\cl (a)$ instead of $\cl (\{a\})$. Similarly, for sets $A, B \subseteq |W|$ and $a \in |W|$, we will write $\cl (Aa)$ instead of $\cl (A \cup \{a\})$ and $\cl (AB)$ instead of $\cl (A \cup B)$.
\end{defin}

\begin{defin}\label{basic-def}
  Let $W$ be a closure space.

  \begin{enumerate}
  \item For closure spaces $W_1, W_2$, we say that a function $f: |W_1| \rightarrow |W_2|$ is an \emph{isomorphism} if it is a bijection and for any $A \subseteq |W_1|$, $f[\cl^{W_1} (A)] = \cl^{W_2} (f[A])$. When $W_1 = W_2 = W$, we say that $f$ is an \emph{automorphism} of $W$.
  \item We say that $A \subseteq |W|$ is \emph{closed} if $\cl^W (A) = A$.
  \item\label{homog-def} For $\mu$ an infinite cardinal, we say that $W$ is \emph{$\mu$-homogeneous} if for any set $A$ with $|A| < \mu$ and any $a, b \in |W| \backslash \cl^{W} (A)$, there exists an automorphism of $W$ that fixes $A$ pointwise and sends $a$ to $b$.

  \item Let $\LS (W)$ be the least infinite cardinal $\mu$ such that for any $A \subseteq |W|$, $|\cl^W (A)| \le |A| + \mu$.
  \item Let $\kappa (W)$ be the least infinite cardinal $\kappa$ such that for any $A \subseteq |W|$, $a \in \cl^W (A)$ implies that there exists $A_0 \subseteq A$ with $|A_0| < \kappa$ and $a \in \cl^W (A_0)$. We say that $W$ has \emph{finite character} if $\kappa (W) = \aleph_0$.
  \item We say that $W$ has \emph{exchange over $A$} if $A \subseteq |W|$ and for any $a, b$, if $a \in \cl^W (Ab) \backslash \cl^W (A)$, then $b \in \cl^W (Aa)$. We say that $W$ has \emph{exchange} if it has exchange over every $A \subseteq |W|$.
  \item We say that $W$ is a \emph{pregeometry} if it has finite character and exchange.
  \end{enumerate}
\end{defin}

\begin{remark} \
  \begin{enumerate}
  \item $\LS (W) \le \|W\| + \aleph_0$ and $\kappa (W) \le \|W\|^+ + \aleph_0$.
  \item $\LS (W) \le \kappa (W) \cdot \sup_{A \subseteq |W|, |A| < \kappa (W)} |\cl^W (A)|$.
  \end{enumerate}
\end{remark}

\begin{defin}
  For $A \subseteq |W|$, let $W_A$ be the following closure space: $|W_A| := |W| \backslash A$, and $\cl^{W_A} (B) := \cl^W (AB) \cap |W_A|$.
\end{defin}

\begin{lem}\label{exchange-lem}
  Let $W$ be a closure space.
  \begin{enumerate}
  \item For $\mu$ an infinite cardinal, if $W$ is $\mu$-homogeneous, $A \subseteq |W|$ and $|A| < \mu$, then $W_A$ is $\mu$-homogeneous.
  \item $W$ has exchange over $A$ if and only if $W_A$ has exchange over $\emptyset$.
  \item $W$ has exchange if and only if $W$ has exchange over every $A$ with $|A| < \kappa (W)$.
  \end{enumerate}
\end{lem}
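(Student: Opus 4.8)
The plan is to prove the three parts in order, each by unfolding the definitions of $W_A$, $\mu$-homogeneity, exchange, and $\kappa(W)$, and invoking only monotonicity and transitivity. For part (1), I would take $B \subseteq |W_A|$ with $|B| < \mu$ and $a, b \in |W_A| \backslash \cl^{W_A} (B)$, and produce an automorphism of $W_A$ fixing $B$ pointwise and sending $a$ to $b$. Since $\cl^{W_A} (B) = \cl^W (AB) \cap |W_A|$ and $a, b \in |W_A|$, the hypothesis becomes $a, b \notin \cl^W (AB)$. As $\mu$ is infinite, $|A \cup B| < \mu$, so $\mu$-homogeneity of $W$ yields an automorphism $f$ of $W$ fixing $A \cup B$ pointwise with $f(a) = b$. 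Because $f$ fixes $A$ pointwise and is a bijection, $f[|W_A|] = |W_A|$, so $f$ restricts to a bijection of $|W_A|$; for $C \subseteq |W_A|$ one computes $f[\cl^{W_A} (C)] = \cl^W (f[A \cup C]) \cap |W_A| = \cl^W (A \cup f[C]) \cap |W_A| = \cl^{W_A} (f[C])$, using that $f$ fixes $A$, so the restriction is an automorphism of $W_A$ witnessing its $\mu$-homogeneity.

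For part (2), I would unfold both sides. Using $\cl^{W_A} (b) = \cl^W (Ab) \cap |W_A|$ and $\cl^{W_A} (\emptyset) = \cl^W (A) \cap |W_A|$, the statement ``$W_A$ has exchange over $\emptyset$'' says precisely that for $a, b \in |W| \backslash A$, if $a \in \cl^W (Ab) \backslash \cl^W (A)$ then $b \in \cl^W (Aa)$. This is exactly the restriction of ``$W$ has exchange over $A$'' to parameters outside $A$, so one implication is immediate. For the converse I must check that the excluded cases $a \in A$ and $b \in A$ add no content: if $b \in A$ then $\cl^W (Ab) = \cl^W (A)$, so the hypothesis $a \in \cl^W (Ab) \backslash \cl^W (A)$ is never met; if $a \in A$ then $a \in \cl^W (A)$ by monotonicity, again contradicting the hypothesis. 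Hence both cases are vacuous and the two statements coincide.

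For part (3), the forward direction is trivial. For the converse, assume $W$ has exchange over every $A$ with $|A| < \kappa (W)$, fix an arbitrary $A \subseteq |W|$, and suppose $a \in \cl^W (Ab) \backslash \cl^W (A)$. Since $a \in \cl^W (Ab)$, the definition of $\kappa (W)$ furnishes $A_0 \subseteq Ab$ with $|A_0| < \kappa (W)$ and $a \in \cl^W (A_0)$. Setting $A_1 := A_0 \backslash \{b\} \subseteq A$, I have $|A_1| < \kappa (W)$ and $A_0 \subseteq A_1 b$, so monotonicity and transitivity give $a \in \cl^W (A_1 b)$, while $A_1 \subseteq A$ together with $a \notin \cl^W (A)$ gives $a \notin \cl^W (A_1)$. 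Applying exchange over $A_1$ then yields $b \in \cl^W (A_1 a) \subseteq \cl^W (Aa)$, as desired, establishing exchange over $A$.

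I expect the only genuinely delicate point to be the vacuousness check in part (2), where one must confirm that passing to parameters outside $A$ loses nothing, together with the bookkeeping in part (3) that isolates $b$ from the part of the $\kappa (W)$-witness lying in $A$. The automorphism restriction in part (1) and the small-set reduction in part (3) are otherwise routine, resting entirely on monotonicity and transitivity.
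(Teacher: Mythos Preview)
Your proof is correct and follows exactly the approach the paper intends; the paper's own proof is simply ``Straightforward.'' You have accurately filled in the routine details in all three parts, including the vacuousness check in part (2) and the $\kappa(W)$-witness bookkeeping in part (3).
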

\begin{proof}
  Straightforward.
\end{proof}

Closure spaces where exchange always fails are studied in the literature under the names ``antimatroid'' or ``convex geometry'' \cite{theory-convex-geometries}. One of the first observations one can make is that there is a natural ordering in this context:

\begin{defin}\label{i-def}
  Let $W$ be a closure space. For $a, b \in |W|$, say $a \le b$ if $a \in \cl (b)$. We say $a < b$ if $a \le b$ but $b \not \le a$. We write $a \sim b$ if both $a \le b$ and $b \le a$. We denote by $I(W)$ the partial order on $|W| / \sim$ induced by $\le$.
\end{defin}
\begin{remark}
By the transitivity axiom, $(|W|, \le)$ is indeed a pre-order. Moreover any automorphism of $W$ induces an automorphism of $(|W|, \le)$ and hence of $I (W)$.
\end{remark}
\begin{remark}
  Let $W$ be a closure space where $\emptyset$ is closed. Then $W$ fails exchange over $\emptyset$ if and only if there exists $a, b \in |W|$ such that $a < b$.
\end{remark}

To give conditions under which exchange follows from homogeneity, we will study the ordering $I(W)$ from Definition \ref{i-def}. The key is:

\begin{lem}\label{dlo-lem}
  If $W$ is $\aleph_0$-homogeneous and $\emptyset$ is closed, then $I(W)$ is both 1-transitive and 2-transitive. More precisely, for any $a$ and $c$ in $W$, there is an automorphism of $W$ sending $a$ to $c$ and if $b \not \le a$ and $d \not \le c$, then there exists an automorphism of $W$ sending $(a, b)$ to $(c, d)$. In particular if $W$ fails exchange over $\emptyset$, then $I (W)$ is a dense linear order without endpoints.
\end{lem}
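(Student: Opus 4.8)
The plan is to derive 1-transitivity and 2-transitivity directly from $\aleph_0$-homogeneity, and then bootstrap these two properties, together with the failure of exchange, into the full description of $I(W)$. Throughout I will use that any automorphism $f$ of $W$ preserves $\le$ and hence $<$: since $f[\cl(x)] = \cl(f(x))$ and $f$ is a bijection, we have $x \le y$ iff $f(x) \le f(y)$.

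For 1-transitivity I would simply apply $\aleph_0$-homogeneity with the parameter set $A = \emptyset$: since $\emptyset$ is closed, $\cl(\emptyset) = \emptyset$, so every element of $|W|$ lies outside $\cl(\emptyset)$, and homogeneity produces an automorphism sending any prescribed $a$ to any prescribed $c$. For 2-transitivity, suppose $b \not\le a$ and $d \not\le c$, i.e.\ $b \notin \cl(a)$ and $d \notin \cl(c)$. First use 1-transitivity to get an automorphism $f$ with $f(a) = c$; since $f$ preserves $\le$, from $b \notin \cl(a)$ we get $f(b) \notin \cl(c)$. Now both $f(b)$ and $d$ lie in $|W| \setminus \cl(c)$, and $|\{c\}| < \aleph_0$, so $\aleph_0$-homogeneity over $A = \{c\}$ yields an automorphism $g$ fixing $c$ and sending $f(b)$ to $d$. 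Then $g \circ f$ sends $(a,b)$ to $(c,d)$, as desired.

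For the final claim, assume $W$ fails exchange over $\emptyset$; by the remark preceding the lemma this gives $a_0, b_0$ with $a_0 < b_0$. Linearity: if $x, y$ were incomparable, then in particular $y \not\le x$, so by 2-transitivity there is an automorphism sending $(x,y)$ to $(a_0, b_0)$; preservation of $\le$ then forces $x \le y$ iff $a_0 \le b_0$, and since $a_0 \le b_0$ holds we would conclude $x \le y$, contradicting incomparability. No endpoints: given any $x$, 1-transitivity provides an automorphism sending $a_0$ to $x$, whose value at $b_0$ is then strictly above $x$; symmetrically, an automorphism sending $b_0$ to $x$ produces an element strictly below $x$. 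In particular $I(W)$ is a nonempty linear order with neither a least nor a greatest element, hence infinite.

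The main obstacle is density, which follows not from 1- or 2-transitivity in isolation but from their interaction. I would argue by contradiction: suppose some pair $x < y$ is a \emph{covering pair}, meaning nothing lies strictly between them. Given any $u < v$, 2-transitivity (using $y \not\le x$ and $v \not\le u$) yields an automorphism $f$ with $f(x) = u$ and $f(y) = v$; since $f$ preserves $<$, an element strictly between $u$ and $v$ would pull back to one strictly between $x$ and $y$, so $(u,v)$ is again a covering pair. Thus \emph{every} strictly ordered pair would be covering, which in a linear order forces at most two points, since three comparable points $u < v < w$ make $(u,w)$ non-covering. This contradicts the fact that $I(W)$ has no endpoints and is therefore infinite. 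Hence no covering pair exists and $I(W)$ is dense, completing the proof that $I(W)$ is a dense linear order without endpoints.
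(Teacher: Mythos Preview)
Your proof is correct and follows essentially the same approach as the paper: derive 1- and 2-transitivity directly from $\aleph_0$-homogeneity (your $g\circ f$ is the paper's $f^{-1}\circ g^{-1}$ with the roles reversed), then use these together with a witness $a_0<b_0$ to obtain linearity, no endpoints, and density. The only cosmetic difference is in the density step: the paper (tersely) would exhibit a three-element chain from no endpoints and transport it via 2-transitivity, whereas you argue by contradiction that a covering pair would force $|I(W)|\le 2$---both arguments are the same 2-transitivity observation in contrapositive form.
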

\begin{proof}
  We have that $d \notin \cl (c)$ and $b \notin \cl (a)$. By $\aleph_0$-homogeneity, there exists an automorphism $f$ of $W$ taking $c$ to $a$ (using that $\emptyset$ is closed, so $a, c \notin \cl (\emptyset) = \emptyset$. Let $d' := f (d)$. Then $d' \notin \cl (a)$. By $\aleph_0$-homogeneity, there exists an automorphism $g$ of $W$ fixing $a$ and sending $d'$ to $b$. Let $h := f^{-1} \circ g^{-1}$. Then $h (a) = c$ and $h (b) = d$, as desired. Now if exchange over $\emptyset$ fails, there exists $a, b$ such that $a < b$. By 2-transitivity, any $c, d$ with $c \not \le d$ must satisfy $d < c$. Thus $I(W)$ is linear. We similarly obtain (using 1-transitivity) that $I(W)$ is dense and without endpoints.
\end{proof}

\begin{thm}\label{pregeom-emptyset-thm}
  Let $W$ be an $\aleph_0$-homogeneous closure space where $\emptyset$ is closed. Then $W$ has exchange over $\emptyset$ if at least one of the following conditions hold:

  \begin{enumerate}
  \item $\|W\| < \aleph_0$.
  \item $\|W\| \ge \LS (W)^{++}$.
  \item\label{emptyset-thm-3} $W$ is $\LS (W)^+$-homogeneous and $\kappa (W) = \aleph_0$.
  \end{enumerate}
\end{thm}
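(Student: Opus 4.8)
The plan is to prove the contrapositive uniformly: I would assume that $W$ \emph{fails} exchange over $\emptyset$ and derive a contradiction from each of the three conditions. Since $W$ is $\aleph_0$-homogeneous and $\emptyset$ is closed, Lemma~\ref{dlo-lem} applies and tells us that $I(W)$ is a dense linear order without endpoints. Writing $\mu := \LS (W)$, two bookkeeping facts drive everything. First, for a single element $\cl (a) = \{w : w \le a\}$ by definition of $\le$, so each initial segment $\{[w] : [w] \le [a]\}$ of $I(W)$ is the image of $\cl (a)$ and therefore has size at most $|\cl (a)| \le \mu$. Second, each $\sim$-class satisfies $[a] \subseteq \cl (a)$, so $|[a]| \le \mu$ and hence $\|W\| \le |I(W)| \cdot \mu$.

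Condition (1) is then immediate, since a dense linear order without endpoints is infinite and so $W$ cannot be finite. For condition (2) I would bound $|I(W)|$ by a cofinality argument. Let $\lambda := \cf{I(W)}$ and fix a strictly increasing cofinal sequence $\langle a_\xi : \xi < \lambda \rangle$ in $I(W)$. If $\lambda \ge \mu^{++}$, then $a_{\mu^+}$ is defined and the $\mu^+$ distinct points $\{a_\xi : \xi < \mu^+\}$ all lie in the initial segment $\{[w] : [w] \le a_{\mu^+}\}$, which has size at most $\mu$ --- a contradiction. Hence $\lambda \le \mu^+$, and as $I(W) = \bigcup_{\xi < \lambda} \{[w] : [w] \le a_\xi\}$ is a union of at most $\mu^+$ sets each of size at most $\mu$, we get $|I(W)| \le \mu^+$ and thus $\|W\| \le \mu^+ \cdot \mu = \mu^+ < \mu^{++}$, contradicting $\|W\| \ge \mu^{++}$. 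Note this case uses only the conclusion of Lemma~\ref{dlo-lem} plus cardinal arithmetic.

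Condition (3) is where the real work lies, and the main obstacle is to use finite character to pin down $\cl$ on infinite sets. The key observation is that linearity of $I(W)$ already forces $\cl (F) = \cl (\max F) = \{w : w \le \max F\}$ for every finite $F$, since $F \subseteq \cl (\max F)$ and transitivity applies; finite character ($\kappa (W) = \aleph_0$) then upgrades this to $\cl (A) = \{w : w \le x \text{ for some } x \in A\}$ for all $A$, i.e.\ $\cl$ is exactly down-closure. Consequently, fixing any $a$ and setting $A := \{w : w < a\}$, we find that $A$ is closed, $a \notin \cl (A) = A$, and the complementary final segment $\{w : w \ge a\}$ has least $\sim$-class $[a]$. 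Since $|A| \le |\cl (a)| \le \mu < \mu^+$, the $\LS (W)^+$-homogeneity of $W$ furnishes an automorphism $f$ fixing $A$ pointwise with $f (a) = a'$ for a prescribed $a' > a$ (such $a'$ exists as $I(W)$ has no endpoints, and indeed $a, a' \notin \cl (A)$). But $f$ fixes $A$ pointwise, hence fixes the initial segment of $I(W)$ below $[a]$ pointwise, so the induced order-automorphism of $I(W)$ must fix the least element $[a]$ of the complementary final segment; this yields $[a] = [f (a)] = [a'] > [a]$, the desired contradiction.

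I expect the identification of $\cl$ with down-closure in case (3) to be the crux: it is exactly what converts the abstract failure of exchange into the concrete obstruction ``there is a closed set whose complement has a least element'', and it is this least element that the strong homogeneity cannot move. The cofinality estimate in case (2) is the other non-routine ingredient; the remaining verifications (that $\cl (a)$ is the down-set, that classes inject into closures, and that $f$ induces an order-automorphism of $I(W)$) are routine given the earlier definitions and the remark following Definition~\ref{i-def}.
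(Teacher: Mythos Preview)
Your proof is correct. Case (1) matches the paper. In case (2) you take a genuinely different route: the paper picks closed sets $A \subseteq B$ with $|A| \le \LS(W)$, $|B| = \LS(W)^+$, then uses $\aleph_0$-homogeneity to transport some $a \in A$ to some $b \notin B$, contradicting $|(-\infty,a)| \le \LS(W) < \LS(W)^+ \le |(-\infty,b)|$; your cofinality-counting argument reaches the same bound $\|W\| \le \LS(W)^+$ without invoking homogeneity beyond Lemma~\ref{dlo-lem}, which is a mild gain in economy. In case (3) the two arguments share the same core: the paper proves (Claim 2) that finite character plus linearity forces $\cl$ to commute with unions, hence $\cl((-\infty,b)) = \bigcup_{a<b}(-\infty,a] = (-\infty,b)$, and (Claim 1) that $\LS(W)^+$-homogeneity forces $b \in \cl((-\infty,b))$ because an automorphism fixing $(-\infty,b)$ cannot move its least upper bound $b$ --- the tension between these two is the contradiction. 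Your identification of $\cl$ with down-closure is exactly the content of Claim 2 specialized to unions of singletons, and your ``least element of the complementary final segment is fixed'' is exactly the Claim 1 argument rephrased. So the substance is the same; your packaging (first pin down $\cl$ globally, then apply homogeneity once) is arguably cleaner than the paper's two-claim structure.
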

\begin{proof}
  Suppose for a contradiction that exchange over $\emptyset$ fails.
  
  For $b \in |W|$, write $(-\infty, b) := \{a \in |W| : a < b\}$, and similarly for $(-\infty, b]$. Note that if $A \subseteq |W|$ is closed and $a \in A$, then by definition of $\le$ and the transitivity axiom, $(-\infty, a] \subseteq A$. Similarly, if $b \notin A$ then by Lemma \ref{dlo-lem} $A \subseteq (-\infty, b)$.

      \begin{enumerate}
      \item If $\|W\| < \aleph_0$, then Lemma \ref{dlo-lem} directly gives a contradiction.
  \item Let $A \subseteq |W|$ be closed such that $|A| \le \LS (W)$ and let $B \subseteq |W|$ be closed with $A \subseteq B$ and $|B| = \LS (W)^+$. Let $a \in A$ and let $b \notin B$. Then $(-\infty, a) \subseteq A$ and $B \subseteq (-\infty, b)$. Therefore $|(-\infty, a)| \le \LS (W)$ and $|(-\infty, b)| \ge \LS (W)^+$. However by homogeneity there exists an automorphism of $W$ sending $a$ to $b$, a contradiction.
  \item We first prove two claims.
    
    \underline{Claim 1}: If $b \in |W|$, then $\cl ((-\infty, b)) = (-\infty, b]$.
      
      \underline{Proof of Claim 1}: Let $B := \cl(-\infty, b)$. First note that $B \subseteq \cl (b)$, hence $|B| \le \LS (W)$ (so we can apply homogeneity to it) and $B \subseteq (-\infty, b]$. By monotonicity, $(-\infty, b) \subseteq B$. Also, if $B \neq (-\infty, b)$, then $b \in B$ (say $c \in B \backslash (-\infty, b)$. Then $c \not < b$, so by Lemma \ref{dlo-lem}, $b \le c$, so since $B$ is closed $b \in B$). Thus if $b \notin B$, then $B = (-\infty, b)$. This is impossible: take $c \in |W|$ such that $b < c$ (exists by Lemma \ref{dlo-lem}). Then there is an automorphism of $W$ taking $b$ to $c$ fixing $B$, which is impossible as $b$ is a least upper bound of $B$ but $c$ is not. Therefore $(-\infty, b] \subseteq B$. $\dagger_{\text{Claim 1}}$

          \underline{Claim 2}: If $\seq{A_i : i \in I}$ is a non-empty collection of subsets of $|W|$, then $\cl (\bigcup_{i \in I} A_i) = \bigcup_{i \in I} \cl (A_i)$.
          
          \underline{Proof of Claim 2}: Clearly, the right hand side is contained in the left hand side. We show the other inclusion. Let $A := \bigcup_{i \in I} A_i$. Let $a \in \cl (A)$. By finite character, there exists a finite $A' \subseteq A$ such that $a \in \cl (A')$. Since $\emptyset$ is closed, $A'$ cannot be empty. Say $A' = \{a_0, \ldots, a_{n - 1}\}$, with $a_0 \le a_1 \le \ldots \le a_{n - 1}$ (we are implicitly using Lemma \ref{dlo-lem}). Then $a \in \cl (a_{n - 1})$. Pick $i \in I$ such that $a_{n - 1} \in A_i$. Then $a \in \cl (A_i)$, as desired. $\dagger_{\text{Claim 2}}$

          Now pick any $b \in |W|$. Note that (using Lemma \ref{dlo-lem}) $(-\infty, b) = \bigcup_{a < b} (-\infty, a) = \bigcup_{a < b} (-\infty, a]$. However on the one hand, by Claim 1, $\cl (\bigcup_{a < b} (-\infty, a)) = \cl ((-\infty, b)) = (-\infty, b]$ but on the other hand, by Claim 2, $\cl (\bigcup_{a < b} (-\infty, a)) = \bigcup_{a < b} \cl ((-\infty, a)) = \bigcup_{a < b} (-\infty, a] = (-\infty, b)$, a contradiction.
  \end{enumerate}
\end{proof}

\begin{cor}\label{pregeom-cor}
  Let $W$ be a $(\kappa (W) + \LS (W)^+)$-homogeneous closure space. If either $\kappa (W) = \aleph_0$ or $\|W\| \notin [\aleph_0, \LS (W)^{+}]$, then $W$ has exchange.
\end{cor}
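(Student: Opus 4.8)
The plan is to reduce to the ``exchange over $\emptyset$'' situation of Theorem \ref{pregeom-emptyset-thm} by passing to the relativizations $W_C$, treating small closed sets first and then upgrading to arbitrary sets. Throughout write $\lambda := \LS (W)$ and $\mu := \kappa (W) + \LS (W)^+$, so $W$ is $\mu$-homogeneous.

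First I would show that $W$ has exchange over every closed set $C$ with $|C| \le \lambda$. For such $C$, Lemma \ref{exchange-lem}(2) reduces this to $W_C$ having exchange over $\emptyset$, and $\emptyset$ is closed in $W_C$ precisely because $C$ is closed in $W$; since $|C| \le \lambda < \mu$, Lemma \ref{exchange-lem}(1) gives that $W_C$ is $\mu$-homogeneous, hence $\aleph_0$-homogeneous, and the formula $\cl^{W_C}(B) = \cl^W (CB) \cap |W_C|$ yields $\LS (W_C) \le |C| + \lambda = \lambda$. So $W_C$ meets the standing hypotheses of Theorem \ref{pregeom-emptyset-thm}, and I would verify one of its conditions: if $\kappa (W) = \aleph_0$ (which also covers $\|W\| < \aleph_0$), then $W_C$ inherits finite character and, being $\mu = \lambda^+$-homogeneous, is $\LS (W_C)^+$-homogeneous, so condition \ref{emptyset-thm-3} applies; if instead $\|W\| \ge \lambda^{++}$, then $\|W_C\| = \|W\| \ge \lambda^{++} \ge \LS (W_C)^{++}$ (deleting the $\le\lambda$ points of $C$ does not change the cardinality), so condition (2) applies.

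Next I would upgrade to arbitrary sets. Given $a \in \cl (Ab) \setminus \cl (A)$, I would pick, using the definition of $\kappa (W)$, a witness $T \subseteq Ab$ with $|T| < \kappa (W)$ and $a \in \cl (T)$; necessarily $b \in T$, and $C := \cl (T \cap A)$ satisfies $a \in \cl (Cb) \setminus C$. \emph{Provided} $|T \cap A| \le \lambda$, the set $C$ is small, the first step gives $b \in \cl (Ca) \subseteq \cl (Aa)$, and exchange over $A$ follows. Thus $W$ has exchange as soon as every such witness can be chosen with $|T\cap A| \le \lambda$, that is, as soon as $\kappa (W) \le \lambda^+$. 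When $\kappa (W) = \aleph_0$ this is automatic, so the case $\kappa (W) = \aleph_0$ is finished.

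The main obstacle is therefore the remaining case $\|W\| \ge \lambda^{++}$ with $\kappa (W) > \lambda^+$, and my approach would be to show that this cannot occur: under $\mu$-homogeneity, $\|W\| \ge \LS (W)^{++}$ should force the local-character bound $\kappa (W) \le \LS (W)^+$, after which the previous step closes everything. To this end I would assume $a \in \cl (A)$ with $|A| = \lambda^+$ and no witness of size $\le \lambda$, enumerate $A = \langle a_\xi : \xi < \lambda^+\rangle$, and form the increasing chain of closed sets $C_\xi := \cl (\{a_\eta : \eta < \xi\})$, each of size $\le \lambda$, whose union contains $a$ although no single $C_\xi$ does. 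I would then try to run the supremum/initial-segment argument behind Theorem \ref{pregeom-emptyset-thm}(2) and Lemma \ref{dlo-lem} along this chain: a $\mu$-homogeneous automorphism fixing a cofinal set must fix its supremum, so the limiting element cannot be displaced, and playing this against the abundance of points supplied by $\|W\| \ge \lambda^{++}$ and against the gap $\LS (W) = \lambda < |C_\xi|$ ought to yield a contradiction. Converting this soft supremum-preservation phenomenon into an actual proof of $\kappa (W) \le \LS (W)^+$ is the step I expect to be delicate and to require the real work.
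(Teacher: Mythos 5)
Your first step and your handling of the case $\kappa(W) = \aleph_0$ are correct, and your second step (reducing exchange over an arbitrary $A$ to exchange over $\cl^W(T \cap A)$ for a witness $T$ of size $< \kappa(W)$) is exactly the content of Lemma \ref{exchange-lem}(3), which is what the paper invokes. The genuine gap is in the remaining case $\|W\| \ge \LS(W)^{++}$ with $\kappa(W) > \aleph_0$: you reduce everything to the claim that $\mu$-homogeneity together with $\|W\| \ge \LS(W)^{++}$ forces $\kappa(W) \le \LS(W)^+$, you acknowledge that you cannot prove it, and nothing in the hypotheses of the corollary suggests such a bound (the hypotheses tie the homogeneity degree to $\kappa(W)$, not $\kappa(W)$ to $\LS(W)$). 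So as written the proposal does not prove the second case.

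The missing idea is that you relativized over too small a family of closed sets, and this is what manufactures the spurious need for $\kappa(W) \le \LS(W)^+$. In your first step you only establish exchange over closed sets of size $\le \LS(W)$, which forces the witness $T$ to satisfy $|T| \le \LS(W)$. Instead, run the first step for every closed set of the form $C = \cl^W(A)$ with $|A| < \kappa(W)$: the homogeneity hypothesis $\mu = \kappa(W) + \LS(W)^+$ is calibrated precisely so that $|C| \le |A| + \LS(W) < \mu$, hence Lemma \ref{exchange-lem}(1) still applies and $W_C$ is $\mu$-homogeneous (in particular $\aleph_0$-homogeneous with $\emptyset$ closed), has $\LS(W_C) \le \LS(W)$, inherits $\kappa(W_C) = \aleph_0$ when $\kappa(W) = \aleph_0$, and retains $\|W_C\| \ge \LS(W)^{++}$ when $\|W\| \ge \LS(W)^{++}$; Theorem \ref{pregeom-emptyset-thm} then applies to $W_C$ directly. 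Combined with Lemma \ref{exchange-lem}(2) and (3) this is the paper's entire proof, and it closes the case you left open with no supremum-preservation argument and no claim about the size of $\kappa(W)$.
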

\begin{proof}
  Let $\mu := \kappa (W) + \LS (W)^+$. By Lemma \ref{exchange-lem}, it is enough to see that $W$ has exchange over every set $A$ with $|A| < \kappa (W)$. Fix such an $A$. By Lemma \ref{exchange-lem}, it is enough to see that $W' := W_{\cl^W (A)}$ has exchange over $\emptyset$. Note that $W'$ is $\mu$-homogeneous and $\LS (W') \le \LS (W)$. Moreover $\emptyset$ is closed in $W'$. Observe that $\kappa (W) = \aleph_0$ implies that $\kappa (W') = \aleph_0$, $\|W'\| \le \|W\|$, but $\|W\| \ge \LS (W)^{++}$ implies that $\|W'\| \ge \LS (W)^{++}$. Therefore by Theorem \ref{pregeom-emptyset-thm}, $W'$ has exchange over $\emptyset$, as desired.
\end{proof}

We give a few examples showing that the hypotheses of Corollary \ref{pregeom-cor} are near optimal:

\begin{example}\label{example-pregeom} \
  \begin{enumerate}
  \item On any partial order $\mathbb{P}$, one can define a closure operator $\cl_1$ by $\cl_1 (A) := \{b \in \mathbb{P} \mid \exists a \in A : b \le a\}$. The resulting closure space $W_{1, \mathbb{P}}$ has exchange over $\emptyset$ if and only if there are no $a, b \in \mathbb{P}$ with $a < b$. Note that if $\mathbb{P}$ is e.g.\ a dense linear order, then $W_{1, \mathbb{P}}$ is not $\aleph_1$-homogeneous.
  \item On the other hand, one can define $\cl_2 (A) := \cl_1 (A) \cup \{b \in \mathbb{P} \mid \forall c (c < b \rightarrow c \in \cl_1 (A))\}$. This gives a closure space $W_{2, \mathbb{P}}$. Setting $\mathbb{P} := \mathbb{Q}$, $W_{2, \mathbb{Q}}$ is $\aleph_1$-homogeneous and does not have exchange over $\emptyset$ but note that $\kappa (W_{2, \mathbb{Q}}) = \aleph_1$, because the statement ``$0 \in \cl ((-\infty, 0))$'' is not witnessed by a finite subset of $(-\infty, 0)$.
  \item The closure space $W_{2, \mathbb{Q} \times \omega_1}$ (where $\mathbb{Q} \times \omega_1$ is ordered by the reverse lexicographical ordering, i.e.\ the second component is the most significant) is also $\aleph_1$-homogeneous, satisfies $\LS (W_{2, \mathbb{Q} \times \omega_1}) = \aleph_0$, $\kappa (W_{2, \mathbb{Q} \times \omega_1}) = \aleph_1$, and does not have exchange over $\emptyset$.
  \end{enumerate}
\end{example}

\begin{remark}\label{previous-work-rmk}
  In \cite[\S5]{pillay-tanovic}, Pillay and Tanović generalize an earlier result of Itai, Tsuboi, and Wakai \cite[2.8]{itai-tsuboi-wakai}) by proving (roughly) that any quasiminimal structure of size at least $\aleph_2$ induces a pregeometry. Here, we call a structure \emph{quasiminimal} if every definable set is either countable or co-countable. Thus the Pillay-Tanović result is a (more general) version of Corollary \ref{pregeom-cor} for the case $\kappa (W) = \aleph_0$, $\|W\| \ge \aleph_2$, and $\LS (W) = \aleph_0$.

  Note that in the Pillay-Tanović context the hypothesis that the size should be at least $\aleph_2$ is needed: consider \cite[Example 2.2(3a)]{itai-tsuboi-wakai} the structure $M := (\mathbb{Q} \times \omega_1, <)$ (where as above $<$ denotes the reverse lexicographical ordering). The closure space induced by $M$ is the same as $W_{1, \mathbb{Q} \times \omega_1}$ from Example \ref{example-pregeom}, so it does not have exchange. Note that $M$ is homogeneous in the model-theoretic sense that every countable partial elementary mapping from $M$ into $M$ can be extended (and also in the syntactic sense of \cite[\S4]{pillay-tanovic}), but this does \emph{not} make the corresponding closure space homogeneous in the sense of Definition \ref{basic-def}(\ref{homog-def}). Indeed, two elements could satisfy the same first-order type but not the same type e.g.\ in an infinitary logic. This is used in the proof of Theorem \ref{pregeom-emptyset-thm}(\ref{emptyset-thm-3}): if $(I, <)$ is a dense linear order and $b < c$, then $b$ and $c$ will satisfy the same first-order type over $(-\infty, b)$, but there cannot be an automorphism sending $b$ to $c$ fixing $(-\infty, b)$. Thus $M$ cannot be a counterexample to Theorem \ref{pregeom-emptyset-thm}(\ref{emptyset-thm-3}). In the proof of Theorem \ref{pregeom-thm}, we will build a (Galois) saturated model $N$ and work with the pregeometry generated by a certain closure operator inside it. The (orbital) homogeneity of $N$ will give homogeneity of the pregeometry in the strong sense given here.
\end{remark}

\section{On AECs admitting intersections}

In this section, we review the definition of an AEC admitting intersections, first introduced by Baldwin and Shelah \cite[1.2]{non-locality}. We give a few known facts and show (Theorem \ref{intersec-up}) that admitting intersections transfers up in AECs: if all models of a fixed size above the Löwenheim-Skolem number admit intersections, then the entire class admits intersections.

We first recall the definition of an abstract elementary class, due to Shelah \cite{sh88}.

\begin{defin}\label{aec-def}
    An \emph{abstract elementary class} (AEC for short) is a pair $\K = (K, \lea)$, where:

  \begin{enumerate}
    \item\label{aec-1} $K$ is a class of $\tau$-structures, for some fixed vocabulary $\tau = \tau (\K)$. 
    \item $\lea$ is a partial order (that is, a reflexive and transitive relation) on $K$. 
    \item $(K, \lea)$ respects isomorphisms: If $M \lea N$ are in $K$ and $f: N \cong N'$, then $f[M] \lea N'$. In particular (taking $M = N$), $K$ is closed under isomorphisms.
    \item\label{aec-4} If $M \lea N$, then $M \subseteq N$. 
    \item\label{aec-5} Coherence: If $M_0, M_1, M_2 \in K$ satisfy $M_0 \lea M_2$, $M_1 \lea M_2$, and $M_0 \subseteq M_1$, then $M_0 \lea M_1$;
    \item Tarski-Vaught axioms: Suppose $\seq{M_i : i \in I}$ is a $\lea$-directed system in $\K$. Then:

        \begin{enumerate}

            \item $M := \bigcup_{i \in I} M_i \in K$ and $M_i \lea M$ for all $i \in I$.
            \item\label{smoothness-axiom} If there is some $N \in K$ so that for all $i \in I$ we have $M_i \lea N$, then we also have $M \lea N$.

        \end{enumerate}

      \item Löwenheim-Skolem-Tarski axiom: There exists a cardinal $\lambda \ge |\tau(\K)| + \aleph_0$ such that for any $M \in K$ and $A \subseteq |M|$, there is some $M_0 \lea M$ such that $A \subseteq |M_0|$ and $\|M_0\| \le |A| + \lambda$. We write $\LS (\K)$ for the minimal such cardinal.
  \end{enumerate}
\end{defin}

We will use $\K_\lambda$ to denote the restriction of $\K$ to models of size $\lambda$, and $\K_{\ge \lambda}$ to similarly denote the restriction of $\K$ to models of size at least $\lambda$. As in \cite{grossbergbook}, we call an \emph{abstract class} a pair $\K = (K, \lea)$ satisfying conditions (\ref{aec-1}) to (\ref{aec-4}) in Definition \ref{aec-def}. We say that an abstract class is \emph{coherent} if it also satisfies (\ref{aec-5}).

Note that any AEC is a coherent abstract class, and if $\K$ is an AEC and $\lambda$ is a cardinal, then $\K_\lambda$ is also a coherent abstract class.

\begin{defin}\label{intersec-def}
  Let $\K$ be a coherent abstract class. Let $N \in \K$ and let $A \subseteq |N|$.

  \begin{enumerate}
  \item Let $\cl^N (A)$ be the set $\bigcap \{|M| : M \lea N \land A \subseteq |M|\}$. Note that $\cl^N (A)$ induces a $\tau (\K)$-substructure of $N$, so we will abuse notation and also write $\cl^N (A)$ for this substructure.
  \item We say that $N$ \emph{admits intersections over $A$} if $\cl^N (A) \lea N$ (more formally, there exists $M \lea N$ such that $|M| = \cl^N (A)$).
  \item We say that $N$ \emph{admits intersections} if it admits intersections over all $A \subseteq |N|$.
  \item We say that $\K$ \emph{admits intersections} if every $N \in \K$ admits intersections.
  \end{enumerate}
\end{defin}

\begin{lem}\label{intersec-closure-lem}
  Let $\K$ be a coherent abstract class and let $N \in \K$. Then $(|N|, \cl^N)$ is a closure space and any $M \lea N$ is closed.
\end{lem}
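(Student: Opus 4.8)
The plan is to verify the two closure-space axioms (monotonicity and transitivity) directly from the definition $\cl^N (A) = \bigcap \{|M| : M \lea N \land A \subseteq |M|\}$, and to extract the ``closed'' assertion as a byproduct. The first thing I would record is that the family being intersected is never empty: since $\lea$ is reflexive we have $N \lea N$, and $A \subseteq |N|$, so $N$ itself belongs to the family. Hence $\cl^N (A)$ is a well-defined subset of $|N|$, and $\cl^N$ is genuinely a function $\mathcal{P} (|N|) \to \mathcal{P} (|N|)$.

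Monotonicity is then immediate: every $M$ in the defining family satisfies $A \subseteq |M|$, so $A$ is contained in the intersection $\cl^N (A)$. The closedness of each $M \lea N$ follows with almost no extra work. Given $M \lea N$, the structure $M$ itself lies in the family defining $\cl^N (|M|)$ (taking the subset to be $|M|$ and noting $|M| \subseteq |M|$), so $\cl^N (|M|) \subseteq |M|$; combined with monotonicity this gives $\cl^N (|M|) = |M|$, i.e.\ $|M|$ is closed.

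The one step requiring a moment's care is transitivity, since it hinges on the order-reversing behaviour of intersection. Suppose $A \subseteq \cl^N (B)$. I would first observe that every $M \lea N$ with $B \subseteq |M|$ automatically satisfies $A \subseteq |M|$: indeed $A \subseteq \cl^N (B) \subseteq |M|$, the last inclusion holding because such an $M$ is one of the terms of the intersection defining $\cl^N (B)$. Consequently the family $\{M : M \lea N \land B \subseteq |M|\}$ is contained in the family $\{M : M \lea N \land A \subseteq |M|\}$. Intersecting over the larger (latter) family yields the smaller set, so $\cl^N (A) \subseteq \cl^N (B)$, which is precisely transitivity.

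With both axioms checked and the closedness of each $M \lea N$ established, nothing further is needed; the argument is entirely formal, and the only genuine subtlety is keeping the direction of the inclusion correct in the transitivity step (a larger index family produces a smaller intersection). This is why the author can dispatch the matter quickly, though I would still spell out the transitivity inclusion explicitly to avoid sign errors.
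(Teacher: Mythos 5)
Your proof is correct and is exactly the routine verification the paper has in mind when it says the lemma is ``immediate from the definition'': monotonicity and closedness of each $M \lea N$ come straight from membership in the intersected family, and transitivity from the containment of index families reversing under intersection. No issues.
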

\begin{proof}
  Immediate from the definition of $\cl^N$.
\end{proof}

The following characterization of admitting intersections in terms of the existence of a certain closure operator will be used often in this paper. The proof is similar to that of \cite[2.11]{ap-universal-apal}.

\begin{fact}\label{intersec-charact}
  Let $\K$ be a coherent abstract class and let $N \in \K$. The following are equivalent:

  \begin{enumerate}
  \item\label{intersec-charact-1} $N$ admits intersections.
  \item\label{intersec-charact-2} For every non-empty collection $S$ of $\K$-substructures of $N$, we have that $\bigcap S \lea N$. 
 \item\label{intersec-charact-3} There is a closure space $W$ such that:

    \begin{enumerate}
    \item $|W| = |N|$.
    \item The closed sets in $W$ are exactly the sets of the form $|M|$ for $M \lea N$.
    \end{enumerate}
  \end{enumerate}
\end{fact}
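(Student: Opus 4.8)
The plan is to prove the equivalence of the three characterizations of admitting intersections by establishing a cycle of implications, say (\ref{intersec-charact-1}) $\Rightarrow$ (\ref{intersec-charact-2}) $\Rightarrow$ (\ref{intersec-charact-3}) $\Rightarrow$ (\ref{intersec-charact-1}), using Fact \ref{intersec-closure-lem} to know that $(|N|, \cl^N)$ is already a closure space with all $\K$-substructures closed.

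First I would prove (\ref{intersec-charact-1}) $\Rightarrow$ (\ref{intersec-charact-2}). Let $S$ be a non-empty collection of $\K$-substructures of $N$, and let $A := |\bigcap S|$ be the common universe of their intersection. The key observation is that $\cl^N (A) = \bigcap S$: on the one hand $A \subseteq |M|$ for every $M \in S$, and each such $M$ satisfies $M \lea N$, so $\cl^N (A) \subseteq \bigcap_{M \in S} |M| = A$, and combined with monotonicity (Lemma \ref{intersec-closure-lem}) this forces $\cl^N (A) = A = |\bigcap S|$. Since $N$ admits intersections over $A$, we get $\cl^N (A) \lea N$, i.e.\ $\bigcap S \lea N$, as desired.

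Next, (\ref{intersec-charact-2}) $\Rightarrow$ (\ref{intersec-charact-3}) is the conceptual heart. I would take $W := (|N|, \cl^N)$, which by Lemma \ref{intersec-closure-lem} is a closure space with $|W| = |N|$. It remains to identify the closed sets. One direction is Lemma \ref{intersec-closure-lem}: every $|M|$ with $M \lea N$ is closed. For the converse, suppose $C \subseteq |N|$ is closed, i.e.\ $\cl^N (C) = C$. By definition $\cl^N (C) = \bigcap \{|M| : M \lea N \land C \subseteq |M|\}$; writing $S$ for the collection of those $M \lea N$ with $C \subseteq |M|$ (non-empty since $N$ itself qualifies), hypothesis (\ref{intersec-charact-2}) gives $\bigcap S \lea N$, and its universe is exactly $\cl^N (C) = C$. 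Hence $C = |M'|$ for some $M' \lea N$, so the closed sets are precisely the universes of $\K$-substructures of $N$.

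Finally, for (\ref{intersec-charact-3}) $\Rightarrow$ (\ref{intersec-charact-1}), I would fix any $A \subseteq |N|$ and show $\cl^N (A) \lea N$. Given the closure space $W$ from (\ref{intersec-charact-3}), note that $\cl^W (A)$ is closed in $W$, hence equals $|M|$ for some $M \lea N$; and since the closed sets of $W$ coincide with the universes of $\K$-substructures, one checks $\cl^W (A) = \cl^N (A)$ (both are the intersection of all closed sets containing $A$, which are the $|M|$ with $M \lea N$ and $A \subseteq |M|$). Thus $\cl^N (A) = |M| \lea N$ for each $A$, giving admitting intersections. I expect the main subtlety, rather than any hard obstacle, to be the careful bookkeeping needed to match two a priori different closure operators—$\cl^N$ and $\cl^W$—and to confirm that $\bigcap S$ as a set-theoretic intersection really induces the substructure guaranteed by the hypotheses; the reference to \cite[2.11]{ap-universal-apal} suggests this is routine but must be stated cleanly using coherence to ensure the intersections are genuinely $\K$-substructures.
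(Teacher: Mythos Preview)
Your proof is correct. Note, however, that the paper does not actually give its own proof of this statement: it is recorded as a \emph{Fact} with only the remark that ``the proof is similar to that of \cite[2.11]{ap-universal-apal}.'' Your cycle of implications (\ref{intersec-charact-1}) $\Rightarrow$ (\ref{intersec-charact-2}) $\Rightarrow$ (\ref{intersec-charact-3}) $\Rightarrow$ (\ref{intersec-charact-1}) is the natural argument and goes through as written; in particular, the identification $\cl^W(A) = \cl^N(A)$ in the last step is justified exactly as you say, since in any closure space $\cl(A)$ is the smallest closed set containing $A$, and by hypothesis the closed sets of $W$ are precisely the universes of $\K$-substructures of $N$.
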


The next result is observed (for AECs) in \cite[2.14(4)]{ap-universal-apal}. The proof generalizes to coherent abstract classes.

\begin{fact}\label{cl-restr}
  Let $\K$ be a coherent abstract class and let $M \lea N$ both be in $\K$. Let $A \subseteq |M|$ If $N$ admits intersections over $A$, then $M$ admits intersections over $A$ and $\cl^M (A) = \cl^N (A)$.
\end{fact}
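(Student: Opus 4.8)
The plan is to work throughout with the $\K$-substructure $C$ of $N$ whose universe is $\cl^N (A)$; this exists and satisfies $C \lea N$ precisely because $N$ admits intersections over $A$. Everything then reduces to bookkeeping with the definition of the intersection closure together with two structural axioms: transitivity of $\lea$ (available since $\lea$ is a partial order) and coherence (available since $\K$ is a coherent abstract class).

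First I would show $\cl^N (A) \subseteq |M|$ and promote this to $C \lea M$. Since $M \lea N$ and $A \subseteq |M|$ by hypothesis, $M$ is one of the substructures $M'$ with $M' \lea N$ and $A \subseteq |M'|$ over which the intersection defining $\cl^N (A)$ is taken; hence $\cl^N (A) \subseteq |M|$, i.e.\ $C \subseteq M$. Now apply coherence to the triple $C \lea N$, $M \lea N$, $C \subseteq M$ to conclude $C \lea M$. This is the one genuinely structural step, and it is exactly where the coherence axiom (and hence the restriction to coherent abstract classes) is used.

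Next I would prove $\cl^M (A) = \cl^N (A)$ by two inclusions. For $\cl^M (A) \subseteq \cl^N (A)$: by monotonicity $A \subseteq \cl^N (A) = |C|$, so together with $C \lea M$ from the previous step, $C$ is one of the substructures over which the intersection defining $\cl^M (A)$ is taken, giving $\cl^M (A) \subseteq |C| = \cl^N (A)$. For the reverse inclusion, let $M'$ range over the $\K$-substructures of $M$ with $A \subseteq |M'|$; each such $M'$ satisfies $M' \lea M \lea N$, so $M' \lea N$ by transitivity, and thus $M'$ also appears in the intersection defining $\cl^N (A)$. Intersecting over all such $M'$ yields $\cl^N (A) \subseteq \cl^M (A)$.

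Finally, that $M$ admits intersections over $A$ is immediate: I have just shown the universe $\cl^M (A)$ equals $|C|$, and $C \lea M$ was established above, so the substructure with universe $\cl^M (A)$ is indeed a $\K$-substructure of $M$. I do not anticipate any real obstacle here; the only point requiring care is the correct bookkeeping of which substructures enter each intersection, and the single nontrivial ingredient is the coherence axiom used to pass from $C \lea N$ to $C \lea M$.
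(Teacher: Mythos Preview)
Your proof is correct and complete; the paper itself does not give an argument for this fact, merely citing \cite[2.14(4)]{ap-universal-apal} and remarking that the proof there generalizes to coherent abstract classes. What you have written is exactly the standard argument one would expect (and is presumably what the cited reference does): use coherence to place $C := \cl^N(A)$ below $M$, then compare the two intersections via transitivity of $\lea$.
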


We now explain why admitting intersections transfers up. This is routine, so we only sketch the proof and leave the details to the reader.

\begin{thm}\label{intersec-up}
  Let $\K$ be an AEC and let $\lambda \ge \LS (\K)$. Let $N \in \K_{\ge \lambda}$. If $M$ admits intersections for all $M \in \K_\lambda$ with $M \lea N$, then $N$ admits intersections. In particular if $\K_\lambda$ admits intersections, then $\K_{\ge \lambda}$ admits intersections.
\end{thm}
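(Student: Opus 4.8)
The plan is to show directly that $\cl^N(A) \lea N$ for every $A \subseteq |N|$, reducing the problem to small sets. First I would record a normalization: every $M \lea N$ with $\|M\| \le \lambda$ admits intersections. Indeed, given such an $M$, the Löwenheim-Skolem-Tarski axiom lets me enlarge it inside $N$ to some $M^+ \lea N$ with $M \lea M^+$ and $\|M^+\| = \lambda$ (possible since $\|N\| \ge \lambda$); then $M^+ \in \K_\lambda$ and $M^+ \lea N$, so $M^+$ admits intersections by hypothesis, and Fact \ref{cl-restr} applied to $M \lea M^+$ transfers this down to $M$. The ``in particular'' clause is then immediate: if $\K_\lambda$ admits intersections then every $M \in \K_\lambda$ with $M \lea N$ does, so the main statement applies to every $N \in \K_{\ge \lambda}$.

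The technical heart is to pin down the closure of a small set. Fix $B \subseteq |N|$ with $|B| \le \lambda$. I claim that $\cl^{M_0} (B)$ does not depend on the choice of $M_0 \lea N$ with $\|M_0\| \le \lambda$ and $B \subseteq |M_0|$. Given two such models $M_0, M_1$, I would use Löwenheim-Skolem-Tarski to find a single $M_{01} \lea N$ with $|M_0| \cup |M_1| \subseteq |M_{01}|$ and $\|M_{01}\| \le \lambda$; crucially no amalgamation property is needed here, since $M_0$ and $M_1$ already sit inside the common model $N$. By coherence $M_0 \lea M_{01}$ and $M_1 \lea M_{01}$, and $M_{01}$ admits intersections by the normalization, so Fact \ref{cl-restr} gives $\cl^{M_0} (B) = \cl^{M_{01}} (B) = \cl^{M_1} (B)$. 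Writing $C$ for this common value, I would then check $C = \cl^N (B)$: one inclusion holds because $C \lea N$ and $B \subseteq |C|$, and the other because any $M' \lea N$ containing $B$ can be shrunk (again by Löwenheim-Skolem-Tarski) to a small $M_1 \lea N$ with $B \subseteq |M_1| \subseteq |M'|$, whence $C = \cl^{M_1} (B) \subseteq |M'|$. In particular $\cl^N (B) = C \lea N$.

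Finally I would treat an arbitrary $A \subseteq |N|$ by a directed-union argument. Consider the family $\{\cl^N (B) : B \subseteq A,\ |B| \le \lambda\}$, which by the previous step consists of models $\lea N$. It is $\lea$-directed: for $B_1, B_2$ the set $B_1 \cup B_2$ is still of size $\le \lambda$, and monotonicity of $\cl^N$ together with coherence yields $\cl^N (B_i) \lea \cl^N (B_1 \cup B_2)$. Hence by the Tarski-Vaught axioms its union $U := \bigcup \{\cl^N (B) : B \subseteq A,\ |B| \le \lambda\}$ satisfies $U \lea N$. Since $A \subseteq U$ (each singleton from $A$ lies in its own closure), $U$ is a $\K$-substructure of $N$ containing $A$, forcing $\cl^N (A) \subseteq U$; the reverse inclusion is monotonicity, so $\cl^N (A) = U \lea N$, i.e.\ $N$ admits intersections over $A$.

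The main obstacle is the intrinsicness step of the middle paragraph: a priori the closure computed inside a small submodel could depend on that submodel, and there is no amalgamation available. The observation that two small $\K$-substructures of $N$ can be merged inside $N$ by Löwenheim-Skolem-Tarski alone, so that Fact \ref{cl-restr} applies, is what makes the small closures genuinely intrinsic and equal to $\cl^N$; once this is in place the passage to arbitrary $A$ is a routine directed-union argument.
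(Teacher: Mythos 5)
Your proof is correct and follows essentially the same strategy as the paper's (sketched) proof: reduce to closures of $\lambda$-sized configurations via Löwenheim--Skolem--Tarski and coherence, show these small closures are intrinsic and agree with $\cl^N$, and then assemble $\cl^N(A)$ as a $\lea$-directed union handled by the Tarski--Vaught axioms. The only difference is cosmetic --- you index the union by small subsets $B\subseteq A$ while the paper indexes it by small $M\lea N$ --- and your middle paragraph supplies exactly the details the paper leaves to the reader.
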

\begin{proof}[Proof sketch]
  Let $A \subseteq |N|$. We want to show that $\cl^N (A) \lea N$. To see this, first prove that:

  $$
  \cl^N (A) = \bigcup_{M \lea N, M \in \K_{\lambda}} \cl^M (A \cap |M|)
  $$

  and then use the Tarski-Vaught chain axioms together with the fact that each $\cl^M (A \cap |M|)$ in the right hand side satisfies $\cl^M (A \cap |M|) \lea M \lea N$, since by hypothesis $M$ admits intersections.
\end{proof}

We will use two facts about AECs admitting intersections in the next section. First, the closure operator has finite character \cite[2.14(6)]{ap-universal-apal}:

\begin{fact}\label{cl-kappa}
  Let $\K$ be an AEC and let $N \in \K$. If $N$ admits intersections, then $\kappa ((|N|, \cl^N)) = \aleph_0$.
\end{fact}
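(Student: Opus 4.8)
The plan is to exploit the Tarski--Vaught chain axioms to realize $\cl^N (A)$ as a directed union of the closures of its finite subsets. So fix $A \subseteq |N|$ and $a \in \cl^N (A)$; the goal is to produce a \emph{finite} $A_0 \subseteq A$ with $a \in \cl^N (A_0)$.

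First I would record the two structural facts about the finite closures. Since $N$ admits intersections, for every finite $A_0 \subseteq A$ the set $\cl^N (A_0)$ is the universe of a $\K$-substructure of $N$, i.e.\ $\cl^N (A_0) \lea N$. Moreover, the collection $\seq{\cl^N (A_0) : A_0 \subseteq A \text{ finite}}$ is $\lea$-directed: given finite $A_0, A_1 \subseteq A$, the set $A_0 \cup A_1$ is again a finite subset of $A$, and because $(|N|, \cl^N)$ is a closure space (Lemma \ref{intersec-closure-lem}) monotonicity and transitivity yield $\cl^N (A_0), \cl^N (A_1) \subseteq \cl^N (A_0 \cup A_1)$; as all three sets are universes of $\K$-substructures of $N$, coherence (Definition \ref{aec-def}(\ref{aec-5})) upgrades these inclusions to $\cl^N (A_0) \lea \cl^N (A_0 \cup A_1)$ and $\cl^N (A_1) \lea \cl^N (A_0 \cup A_1)$.

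Having established directedness, I would set $M^\ast := \bigcup_{A_0 \subseteq A \text{ finite}} \cl^N (A_0)$ and apply the Tarski--Vaught axioms: part (a) gives $M^\ast \in K$ with each $\cl^N (A_0) \lea M^\ast$, and part (b) (smoothness, axiom (\ref{smoothness-axiom})), applied to the common bound $N$, gives $M^\ast \lea N$. Since every singleton from $A$ is a finite subset of $A$, we get $A \subseteq |M^\ast|$, so $M^\ast$ is one of the $\K$-substructures over which the intersection defining $\cl^N (A)$ is taken; hence $\cl^N (A) \subseteq |M^\ast|$. Conversely, each $\cl^N (A_0) \subseteq \cl^N (A)$ by monotonicity of the closure, so $|M^\ast| \subseteq \cl^N (A)$. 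Therefore $\cl^N (A) = |M^\ast| = \bigcup_{A_0 \subseteq A \text{ finite}} \cl^N (A_0)$, and in particular $a \in \cl^N (A_0)$ for some finite $A_0 \subseteq A$, which is exactly finite character.

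The main obstacle---really the only substantive point---is justifying that the directed union remains a $\K$-substructure of $N$; this is precisely where smoothness and the hypothesis that $N$ admits intersections are both essential, the latter being what makes each finite closure $\cl^N (A_0)$ an actual member of the chain (an element $\lea N$) rather than a mere subset. Everything else is routine bookkeeping with monotonicity, coherence, and the defining intersection of $\cl^N$.
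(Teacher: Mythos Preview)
Your argument is correct. The paper does not actually supply a proof of this statement: it is recorded as a fact with a citation to \cite[2.14(6)]{ap-universal-apal}. Your proof is the standard one and is presumably what appears in the cited reference---use coherence to see that $\seq{\cl^N(A_0) : A_0 \subseteq A \text{ finite}}$ is $\lea$-directed, apply the Tarski--Vaught axioms to conclude the union is a $\K$-substructure of $N$ containing $A$, and read off finite character from the defining intersection.
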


To state the second fact, we first recall the definition of a Galois (or orbital) type. The definition is due Shelah (see for example \cite[II.1.9]{shelahaecbook}), but we use the notation from the preliminaries of \cite{sv-infinitary-stability-afml}.

\begin{defin}\label{gtp-def}
  Let $\K$ be an abstract class.
  
  \begin{enumerate}
    \item Let $\K^3$ be the set of triples of the form $(\bb, A, N)$, where $N \in \K$, $A \subseteq |N|$, and $\bb$ is a sequence of elements from $N$. 
    \item For $(\bb_1, A_1, N_1), (\bb_2, A_2, N_2) \in \K^3$, we say $(\bb_1, A_1, N_1)E_{\text{at}} (\bb_2, A_2, N_2)$ if $A := A_1 = A_2$, and there exists $f_\ell : N_\ell \xrightarrow[A]{} N$ such that $f_1 (\bb_1) = f_2 (\bb_2)$.
    \item Note that $E_{\text{at}}$ is a symmetric and reflexive relation on $\K^3$. We let $E$ be the transitive closure of $E_{\text{at}}$.
    \item For $(\bb, A, N) \in \K^3$, let $\gtp (\bb / A; N) := [(\bb, A, N)]_E$. We call such an equivalence class a \emph{Galois type}. 
    \item For $p = \gtp (\bb / A; N)$ a Galois type, define\footnote{It is easy to check that this does not depend on the choice of representatives.} $\ell (p) := \ell (\bb)$ and $\dom{p} := A$. For $M \in \K$, we also define:

      $$\gS (M) := \{\gtp (b / M; N) \mid M \lea N, b \in |N|\}$$

      and

      $$\gS^{<\omega} (M) := \{\gtp (\bb / M; N) \mid M \lea N, \bb \in \fct{<\omega}{|N|}\}$$

      In words, $\gS (M)$ is the set of all types \emph{of length one} over $M$ and $\gS^{<\omega} (M)$ is the set of all types of \emph{any finite length} over $M$.
  \end{enumerate}
\end{defin}

It is easy to check that $\Eat$ is transitive in any abstract class with amalgamation. AECs admitting intersections may not have amalgamation but Galois types are still nicely characterized there (see \cite[1.3(1)]{non-locality} or \cite[2.18]{ap-universal-apal}):

\begin{fact}\label{gtp-eq}
  Let $\K$ be a coherent abstract class admitting intersections. Then $\gtp (\bb_1 / A; N_1) = \gtp (\bb_2 / A; N_2)$ if and only if there exists $f: \cl^{N_1} (A \bb_1) \cong_A \cl^{N_2} (A \bb_2)$ such that $f (\bb_1) = \bb_2$.
\end{fact}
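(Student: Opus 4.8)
The plan is to prove the two directions separately, organizing everything around the fact that $E$ is the transitive closure of $\Eat$. For the (easier) right-to-left direction, I would first use that $\K$ admits intersections to realize $M_\ell := \cl^{N_\ell}(A\bb_\ell)$ as genuine members of $\K$ with $M_\ell \lea N_\ell$ (Definition \ref{intersec-def}). The Galois type over $A$ is unchanged when we pass from $N_\ell$ down to $M_\ell$: this is witnessed directly by $\Eat$, taking the common model to be $N_\ell$ itself, with the inclusion $M_\ell \xrightarrow[A]{} N_\ell$ as $f_1$ and the identity on $N_\ell$ as $f_2$. Given the isomorphism $f : M_1 \cong_A M_2$ with $f(\bb_1) = \bb_2$, I would then observe that $(\bb_1, A, M_1)\,\Eat\,(\bb_2, A, M_2)$ directly, using $M_2$ as the common model with $f_1 = f$ and $f_2 = \id_{M_2}$. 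Chaining the three resulting equalities gives $\gtp(\bb_1/A; N_1) = \gtp(\bb_2/A; N_2)$.

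For the left-to-right direction, the heart of the matter is to show that a \emph{single} $\Eat$-step already produces an isomorphism of closures; the general case then follows because $E$ is the transitive closure of $\Eat$ and the relation ``there exists an isomorphism $\cl^N(A\bb) \cong_A \cl^{N'}(A\bb')$ sending $\bb$ to $\bb'$'' is visibly reflexive, symmetric, and transitive (compose and invert isomorphisms). So the key step to isolate is: if $(\bb, A, N)\,\Eat\,(\bb', A, N')$, then there is an isomorphism $\cl^N(A\bb) \cong_A \cl^{N'}(A\bb')$ sending $\bb$ to $\bb'$.

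To prove this step, I would unravel $\Eat$: there is $P \in \K$ and embeddings $g : N \xrightarrow[A]{} P$ and $g' : N' \xrightarrow[A]{} P$ with $g(\bb) = g'(\bb')$. Since $g$ restricts to an isomorphism $N \cong g[N]$, and isomorphisms of coherent abstract classes respect the closure operator (by the isomorphism-respecting axiom they biject $\K$-substructures and hence commute with the intersection defining $\cl$), we get $g[\cl^N(A\bb)] = \cl^{g[N]}(A\,g(\bb))$. Now $g[N] \lea P$ and $P$ admits intersections, so Fact \ref{cl-restr} collapses the right-hand side to $\cl^{P}(A\,g(\bb))$; symmetrically $g'[\cl^{N'}(A\bb')] = \cl^{P}(A\,g'(\bb'))$. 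The crucial observation---the one place the hypothesis $g(\bb)=g'(\bb')$ is really used---is that these two closures inside $P$ are literally the same subset of $P$, whence $g[\cl^N(A\bb)] = g'[\cl^{N'}(A\bb')]$. Then $f := (g')^{-1}\circ g$, restricted to $\cl^N(A\bb)$, is a well-defined isomorphism onto $\cl^{N'}(A\bb')$ that fixes $A$ and sends $\bb$ to $\bb'$.

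The main obstacle I anticipate is the bookkeeping around images under $g$ and $g'$: one must check carefully that the closure operator genuinely transports along the isomorphisms $N \cong g[N]$ and $N' \cong g'[N']$, and that Fact \ref{cl-restr} legitimately replaces $\cl^{g[N]}$ by $\cl^{P}$ (this needs both $g[N] \lea P$, from $g$ being a $\K$-embedding, and that $P$ admits intersections over $A\,g(\bb)$, from the global hypothesis that $\K$ admits intersections). Once the two closures are identified as the single set $\cl^{P}(A\,g(\bb))$, the remaining construction of $f$ is formal, and the passage from the single-step lemma to the full statement is just the equivalence-relation bookkeeping for the transitive closure. The monotonicity of Galois types under $\lea$-extensions used in the easy direction is likewise routine.
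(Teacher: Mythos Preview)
The paper does not prove this statement: it is recorded as a Fact with citations to \cite[1.3(1)]{non-locality} and \cite[2.18]{ap-universal-apal}, so there is no in-paper argument to compare against. Your proof is correct and is essentially the standard one. The right-to-left direction is routine, and for the left-to-right direction your reduction to a single $\Eat$-step (via the observation that ``isomorphic closures over $A$ sending $\bb$ to $\bb'$'' is already an equivalence relation on $\K^3$) together with the use of Fact~\ref{cl-restr} to identify $\cl^{g[N]}(A\,g(\bb))$ with $\cl^{P}(A\,g(\bb))$ is exactly the right idea; once both images are seen to equal the same closed set in $P$, the composite $(g')^{-1}\circ g$ is well-defined on $\cl^N(A\bb)$ and does the job.
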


\section{Quasiminimal AECs}

In this section, we define quasiminimal AECs and show that they are essentially the same as quasiminimal pregeometry classes.

Following Shelah \cite[II.1.9(1A)]{shelahaecbook}, we will write $\gSna (M)$ for the set of \emph{nonalgebraic} types over $M$: that is, the set of $p \in \gS (M)$ such that $p = \gtp (a / M; N)$ with $a \notin |M|$ (in the context of this paper, there will be a unique nonalgebraic type which we will call \emph{the generic type}). We say that $M \in \K$ is \emph{prime} if for any $N \in \K$, there exists $f: M \rightarrow N$.

\begin{defin}\label{quasimin-def}
  An AEC $\K$ is \emph{quasiminimal} if:

  \begin{enumerate}
  \item\label{qm-1} $\LS (\K) = \aleph_0$.
  \item\label{qm-2} There is a prime model in $\K$.
  \item\label{qm-3} $\K_{\le \aleph_0}$ admits intersections.
  \item\label{qm-4} (Uniqueness of the generic type) For any $M \in \K_{\le \aleph_0}$, $|\gSna (M)| \le 1$.
  \end{enumerate}

  We say that $\K$ is \emph{unbounded} if it satisfies in addition:

\begin{enumerate}
  \setcounter{enumi}{4}
  \item\label{qm-5} There exists $\seq{M_i : i < \omega}$ strictly increasing in $\K$.
  \end{enumerate}
\end{defin}

For the convenience of the reader, we repeat here the definition of a quasiminimal pregeometry class. We use the numbering and presentation from Kirby \cite{quasimin}, see there for more details on the terminology. We omit axiom III (excellence), since it has been shown \cite{quasimin-five} that it follows from the rest. We have added axiom 0(\ref{0-3}) that also appears in Haykazyan \cite[2.2]{hayk2013-jsl} and corresponds to (\ref{qm-2}) in the definition of a quasiminimal AEC, as well as axiom 0(\ref{0-1}) which requires that the class be non-empty and that the vocabulary be countable (this can be assumed without loss of generality, see \cite[5.2]{quasimin}).

As in Definition \ref{quasimin-def}, we call the class \emph{unbounded} if it has an infinite dimensional model (this is the nontrivial case that interests us here).

\begin{defin}\label{quasimin-class-def}
  A \emph{quasiminimal pregeometry class} is a class $\C$ of pairs $(H, \cl_H)$, where $H$ is a $\tau$-structure (for a fixed vocabulary $\tau = \tau (\C)$) and $\cl_H : \mathcal{P} (|H|) \rightarrow \mathcal{P} (|H|)$ is a function, satisfying the following axioms:

  \begin{itemize}
  \item[0: ]
    \begin{enumerate}
    \item\label{0-1} $|\tau (\C)| \le \aleph_0$ and $\C \neq \emptyset$.
    \item\label{0-2} If $(H, \cl_H), (H', \cl_{H'})$ are both $\tau$-structures with functions on their powersets and $f: H \cong H'$ is also an isomorphism from $(|H|, \cl_H)$ onto $(|H'|, \cl_{H'})$, then $(H', \cl_{H'}) \in \C$.
    \item\label{0-3} If $(H, \cl_H), (H', \cl_{H'}) \in \C$, then $H$ and $H'$ satisfy the same quantifier-free sentences.
    \end{enumerate}
  \item[I: ]
    \begin{enumerate}
    \item\label{i-1} For each $(H, \cl_H) \in \C$, $(|H|, \cl_H)$ is a pregeometry such that the closure of any finite set is countable.
    \item\label{i-2} If $(H, \cl_H) \in \C$ and $X \subseteq |H|$, then the $\tau (\C)$-structure induced by $\cl_H (X)$ together with the appropriate restriction of $\cl_H$ is in $\C$.
    \item\label{i-3} If $(H, \cl_H), (H', \cl_{H'}) \in \C$, $X \subseteq |H|$, $y \in \cl_H (X)$, and $f: H \rightharpoonup H'$ is a partial embedding with $X \cup \{y\} \subseteq \preim (f)$, then $f (y) \in \cl_{H'} (f[X])$.
    \end{enumerate}
  \item[II: ]
    Let $(H, \cl_{H}), (H', \cl_{H'}) \in \C$. Let $G \subseteq H$ and $G' \subseteq H'$ be countable closed subsets or empty and let $g: G \rightarrow G'$ be an isomorphism.
    \begin{enumerate}
    \item\label{ii-1} If $x \in |H|$ and $x' \in |H'|$ are independent from $G$ and $G'$ respectively, then $g \cup \{(x, x')\}$ is a partial embedding.
    \item\label{ii-2} If $g \cup f : H \rightharpoonup H'$ is a partial embedding, $f$ has finite preimage $X$, and $y \in \cl_H (X \cup G)$, then there is $y' \in H'$ such that $g \cup f \cup \{(y, y')\}$ is a partial embedding.
    \end{enumerate}
  \item[IV: ]
    \begin{enumerate}
    \item $\C$ is closed under unions of increasing chains: If $\delta$ is a limit ordinal and $\seq{(H_i, \cl_{H_i}) : i < \delta}$ is increasing with respect to being a closed substructure (i.e.\ for each $i < \delta$, $H_i \subseteq H_{i + 1}$ and $\cl_{H_{i + 1}} \rest \mathcal{P} (|H_i|) = \cl_{H_i}$), then $(H_\delta, \cl_{H_\delta}) \in \C$, where $H_\delta = \bigcup_{i < \delta} H_i$ and $\cl_{H_\delta} (X) = \bigcup_{i < \delta} \cl_{H_i} (X \cap |H_i|)$.
    \end{enumerate}
  \end{itemize}

  We say that $\C$ is \emph{unbounded} if it satisfies in addition:
  \begin{itemize}
  \item[IV: ]
    \begin{enumerate}
      \setcounter{enumi}{1}
    \item\label{iv-2} $\C$ contains an infinite dimensional model (i.e.\ there exists $(H, \cl_H) \in \C$ with $\seq{a_i : i < \omega}$ in $H$ such that $a_i \notin \cl_H (\{a_j : j < i\})$ for all $i < \omega$).
    \end{enumerate}
  \end{itemize}
\end{defin}

It is straightforward to show that quasiminimal pregeometry classes are (after forgetting the pregeometry and ordering them with ``being a closed substructure'') quasiminimal AECs. That they are AECs is noted in \cite[\S4]{quasimin}. In fact, the exchange axiom is not necessary for this. The main point is that axiom II of quasiminimal pregeometry classes allows us to do a back and forth argument to prove the desired existence of prime models and the equality of any two nonalgebraic Galois types over a common model. We sketch a proof here for completeness.

\begin{defin}\label{k-of-c}
  Let $\C$ be a quasiminimal pregeometry class.

  \begin{enumerate}
  \item For $(H, \cl_H), (H', \cl_{H'}) \in \C$, we write $(H, \cl_H) \leap{\C} (H', \cl_{H'})$ if $H \subseteq H'$ and $\cl_{H'} \rest \mathcal{P} (|H|) = \cl_H$.
  \item Let $\K = \K (\C) := (K (\C), \lea)$ be defined as follows:
  \begin{enumerate}
  \item\label{k-of-c-1} $K (\C) := \{M \mid \exists \cl : (M, \cl) \in \C\}$.
  \item\label{k-of-c-2} $M \lea N$ if for some $\cl_M, \cl_N$, $(M, \cl_M), (N, \cl_N) \in \C$ and $ (M, \cl_M) \leap{\C} (N, \cl_N)$.
  \end{enumerate}
  \end{enumerate}
\end{defin}

Note that the structure determines the pregeometry (see also the discussion after \cite[1.2]{quasimin}), hence we do not loose any information by going from $\C$ to $\K (C)$:

\begin{lem}\label{uq-cl}
  Let $\C$ satisfy axioms 0 and I from the definition of a quasiminimal pregeometry class except that in I(\ref{i-1}) $\cl_H$ may not have exchange. Let $\K := \K (\C)$. Then $\K$ is a coherent abstract class admitting intersections and for any $M \in \K$, there exists a unique operator $\cl_M$ such that $(M, \cl_M) \in \C$. Moreover $\cl_M = \cl^M$ (see Definition \ref{intersec-def}).
\end{lem}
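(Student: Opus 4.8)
The plan is to make the uniqueness of the closure operator the load-bearing fact: it is what turns $\lea$ into a transitive relation, and once it is in hand everything else is restriction bookkeeping read off from Axioms 0(\ref{0-2}), I(\ref{i-2}), and I(\ref{i-3}). Accordingly, I would run the uniqueness argument first, then verify the coherent-abstract-class axioms, and finally identify $\cl_M$ with $\cl^M$ and deduce admitting intersections.

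\emph{Uniqueness.} Suppose $(M, \cl_M), (M, \cl'_M) \in \C$. Apply Axiom I(\ref{i-3}) with these two pairs as $H, H'$ and the map $f = \id_{|M|}$. This $f$ is a (total) partial embedding precisely because both pairs share the underlying $\tau$-structure $M$, and $\preim(f) = |M|$. For $X \subseteq |M|$ and $y \in \cl_M(X)$, the conclusion of I(\ref{i-3}) reads $y = f(y) \in \cl'_M(f[X]) = \cl'_M(X)$, so $\cl_M(X) \subseteq \cl'_M(X)$; swapping the roles of the two pairs gives equality. Hence each $M \in K$ carries a unique operator, which I write $\cl_M$, and the relation simplifies to: $M \lea N$ iff $M \subseteq N$ and $\cl_N \rest \mathcal{P}(|M|) = \cl_M$.

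\emph{Coherent abstract class.} Reflexivity and the implication $M \lea N \Rightarrow M \subseteq N$ are immediate from this reformulation. For transitivity through $M \lea N \lea P$, uniqueness forces the operator on $N$ witnessing both relations to be the single $\cl_N$, after which $\cl_P \rest \mathcal{P}(|M|) = (\cl_P \rest \mathcal{P}(|N|)) \rest \mathcal{P}(|M|) = \cl_N \rest \mathcal{P}(|M|) = \cl_M$, so $M \lea P$. Invariance under isomorphism follows by transporting operators along $f : N \cong N'$, setting $\cl_{N'}(A) := f[\cl_N(f^{-1}[A])]$ and similarly on $f[M]$, and invoking Axiom 0(\ref{0-2}) to place the transported pairs in $\C$; the restriction condition is preserved, giving $f[M] \lea N'$. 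Coherence is the computation that if $M_0, M_1 \lea M_2$ and $M_0 \subseteq M_1$, then for $A \subseteq |M_0|$ one has $\cl_{M_1}(A) = \cl_{M_2}(A) = \cl_{M_0}(A)$, so $M_0 \lea M_1$.

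\emph{Admitting intersections and $\cl_M = \cl^M$.} Fix $M \in \K$ and $A \subseteq |M|$. By transitivity of the closure space $(|M|, \cl_M)$, the restriction of $\cl_M$ to subsets of $\cl_M(A)$ is a well-defined operator, and Axiom I(\ref{i-2}) places the induced pair $M_A$ in $\C$; its operator is $\cl_M \rest \mathcal{P}(\cl_M(A))$, so $M_A \lea M$ with $|M_A| = \cl_M(A)$. Since $A \subseteq \cl_M(A) = |M_A|$, this gives $\cl^M(A) \subseteq \cl_M(A)$. Conversely, any $M' \lea M$ with $A \subseteq |M'|$ satisfies $\cl_M(A) = \cl_{M'}(A) \subseteq |M'|$, whence $\cl_M(A) \subseteq \cl^M(A)$. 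Therefore $\cl_M = \cl^M$, and as $\cl^M(A) = |M_A|$ is the universe of an $\lea$-substructure of $M$, the model $M$ admits intersections over $A$; since $A$ and $M$ were arbitrary, $\K$ admits intersections. The main obstacle is exactly the uniqueness step via I(\ref{i-3}): without it $\lea$ need not be transitive (the middle model could a priori carry incompatible operators), so the identity-map argument must be carried out before any of the downstream verifications.
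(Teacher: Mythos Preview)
Your proof is correct and follows essentially the same approach as the paper: uniqueness via Axiom I(\ref{i-3}) with the identity map, then the abstract-class axioms from the simplified description of $\lea$, then $\cl_M = \cl^M$ via Axiom I(\ref{i-2}). The paper's proof is terser (it invokes Fact~\ref{intersec-charact} rather than verifying admitting intersections by hand, and does not spell out the isomorphism-invariance and coherence checks), but the skeleton is identical; your observation that uniqueness must precede transitivity is exactly the point the paper leaves implicit.
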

\begin{proof}
  If $M \in \K$, then by definition there exists $\cl_M$ such that $(M, \cl_M) \in \C$. Moreover if $(M, \cl) \in \C$ then by axiom I(\ref{i-3}) used with the identity embedding, $\cl = \cl_M$. It immediately follows from axiom I(\ref{i-2}) that $\K$ is a coherent abstract class. We then get, also using I(\ref{i-2}), that $\cl_M = \cl^M$, and thus using Fact \ref{intersec-charact} that $\K$ admits intersection.
\end{proof}
\begin{thm}\label{qm-aec}
  If $\C$ satisfies all the axioms of a quasiminimal pregeometry class except that in I(\ref{i-1}) $\cl_H$ may not have exchange, then $\K (\C)$ is a quasiminimal AEC. Moreover $\C$ is unbounded if and only if $\K (\C)$ is unbounded.
\end{thm}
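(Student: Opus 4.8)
The plan is to verify the four defining clauses of a quasiminimal AEC (Definition \ref{quasimin-def}) for $\K(\C)$, using the axioms of a quasiminimal pregeometry class. First I would dispense with the structural preliminaries: Lemma \ref{uq-cl} already tells us that $\K(\C)$ is a coherent abstract class admitting intersections, with $\cl_M = \cl^M$. To upgrade ``coherent abstract class'' to ``AEC'', I need to check the Tarski-Vaught chain axioms and the L\"owenheim-Skolem-Tarski axiom. The chain axioms follow from axiom IV(1) (closure under unions of increasing chains), where the key point is that $(M,\cl_M) \leap{\C} (N,\cl_N)$ really means $\cl_N \rest \mathcal{P}(|M|) = \cl_M$, so the hypotheses of IV(1) match the definition of $\leap{\C}$; smoothness (\ref{smoothness-axiom}) is then routine since closure-preservation is preserved under unions. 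For the L\"owenheim-Skolem-Tarski axiom with $\LS(\K(\C)) = \aleph_0$, I would use axiom I(\ref{i-1}) (the closure of a finite set is countable, hence the closure of any countable set is countable) together with axiom I(\ref{i-2}) (closures are substructures in $\C$): given $M \in \K(\C)$ and $A \subseteq |M|$, take $M_0 := \cl_M(A)$, which has size $\le |A| + \aleph_0$ and satisfies $M_0 \lea M$. This simultaneously establishes clause (\ref{qm-1}).

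Clause (\ref{qm-3}) is immediate from Lemma \ref{uq-cl}, which gives that all of $\K(\C)$ admits intersections (in particular $\K(\C)_{\le\aleph_0}$ does). For clause (\ref{qm-2}), existence of a prime model, the natural candidate is the closure of the empty set: by axiom I(\ref{i-2}) the structure $H_0$ induced by $\cl_H(\emptyset)$ (for any $(H,\cl_H)\in\C$, which exists by 0(\ref{0-1})) lies in $\C$, and by I(\ref{i-1}) it is countable. I would show $H_0$ embeds into every $N \in \K(\C)$ by a back-and-forth/enumeration argument: using axiom 0(\ref{0-3}) (all members of $\C$ satisfy the same quantifier-free sentences) to start the map on the empty set, and then axiom II(\ref{ii-2}) repeatedly to extend a partial embedding with finite preimage to cover each successive element of the closed set $\cl_H(\emptyset)$ (here $G = G' = \emptyset$ and $y \in \cl_H(\emptyset) = \cl_H(X \cup \emptyset)$). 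This is essentially the prime-model half of the back-and-forth machinery that axiom II is designed to drive.

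For clause (\ref{qm-4}), uniqueness of the generic (nonalgebraic) type over a countable model $M$, I would combine the characterization of Galois types in classes admitting intersections (Fact \ref{gtp-eq}) with axioms II(\ref{ii-1}) and II(\ref{ii-2}). Suppose $\gtp(a/M;N)$ and $\gtp(a'/M';N')$ are both nonalgebraic, with $M$ (a countable closed set) corresponding via the generic type to $M'$; after identifying $M$ with $M'$ through an isomorphism $g$, nonalgebraicity means $a \notin \cl_N(M) = |M|$ and $a' \notin |M'|$, i.e.\ $a$ and $a'$ are independent from $M$ and $M'$. Axiom II(\ref{ii-1}) then says $g \cup \{(a,a')\}$ is a partial embedding, and axiom II(\ref{ii-2}) lets me extend it to an isomorphism $\cl_N(Ma) \cong_M \cl_{N'}(M'a')$ by exhausting the (countable) closure one element at a time. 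By Fact \ref{gtp-eq}, since $\cl_N(Ma) = \cl^N(Ma)$ by Lemma \ref{uq-cl}, this witnesses equality of the two Galois types, giving $|\gSna(M)| \le 1$. The main obstacle I anticipate is the bookkeeping in this back-and-forth: I must ensure at each stage that the partial map has finite preimage (so that II(\ref{ii-2}) applies) while still covering every element of a countable closure in the limit, and I must correctly match the ``independence from $G$'' hypothesis of II(\ref{ii-1}) with the semantic statement ``$a \notin |M|$'' that nonalgebraicity provides — the translation between the pregeometry-theoretic independence and the Galois-type-theoretic nonalgebraicity is exactly where the definition of $\gSna$ in terms of $\cl_N(M) = |M|$ (valid because closed sets are $\K$-substructures) must be invoked carefully. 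Finally, for the ``moreover'' clause, the equivalence of unboundedness is a direct unwinding of definitions: axiom IV(\ref{iv-2}) produces an infinite $\cl$-independent sequence $\seq{a_i : i<\omega}$, and setting $M_i := \cl_H(\{a_j : j<i\})$ yields a strictly increasing $\omega$-chain in $\K(\C)$ as required by (\ref{qm-5}), and conversely such a chain produces elements $a_i \in |M_{i+1}| \setminus |M_i| = \cl_H(\{a_j:j<i\}a_i) \setminus \cl_H(\{a_j : j<i\})$, witnessing infinite dimension.
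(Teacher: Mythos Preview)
Your proposal is correct and follows essentially the same route as the paper: verify the AEC axioms from axioms 0, I, and IV (with $\LS = \aleph_0$ coming from countability of closures of finite sets), obtain the prime model by a back-and-forth over $\cl_H(\emptyset)$ using 0(\ref{0-3}) to start and II(\ref{ii-2}) to extend, and obtain uniqueness of the generic type by combining II(\ref{ii-1}) with a back-and-forth via II(\ref{ii-2}), invoking Fact \ref{gtp-eq} and Lemma \ref{uq-cl} exactly as the paper does. One small slip to fix: in the converse of the unboundedness equivalence, the equality $|M_{i+1}| \setminus |M_i| = \cl_H(\{a_j:j<i\}\,a_i) \setminus \cl_H(\{a_j:j<i\})$ is false in general; the correct argument is that $\{a_j : j < i\} \subseteq |M_i|$ and $|M_i|$ is closed in $H := \bigcup_k M_k$, so $\cl_H(\{a_j:j<i\}) \subseteq |M_i|$ and hence $a_i \notin \cl_H(\{a_j:j<i\})$.
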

\begin{proof}
  We will sometimes use Lemma \ref{uq-cl} without explicit mention. Let $\K := \K (\C)$. If $\C$ is bounded, there are no infinite increasing chains in $\K$ so by axioms 0 and I, $\K$ is an AEC. If $\C$ is unbounded, axioms 0, I, and IV similarly give that $\K$ is an AEC. Since the closure of any finite set is countable (axiom I(\ref{i-1})) and $|\tau (\C)| \le \aleph_0$ (axiom 0(\ref{0-1})), $\LS (\K) = \aleph_0$. This proves that (\ref{qm-1}) in Definition \ref{quasimin-def} holds.

  As for axiom (\ref{qm-2}), by axiom 0(\ref{0-1}), $\C \neq \emptyset$, hence $\K \neq \emptyset$. Let $M \in \K$ and let $M_0 := \cl^M (\emptyset)$. By Lemma \ref{uq-cl}, $\K$ admits intersections hence $M_0 \in \K$. We show that $M_0$ is the desired prime model. Let $N \in \K$. By axiom 0(\ref{0-3}), the empty map is a partial embedding from $M$ into $N$. Using axiom II(\ref{ii-2}) to do a back and forth argument (see the proof of \cite[2.1]{quasimin}), we can extend it to a map $f_0 : M_0 = \cl^M (\emptyset) \cong \cl^N (\emptyset)$. Since $\cl^N (\emptyset) \lea N$ (for the same reason that $M_0 \lea M$), $f_0$ witnesses that $M_0$ embeds into $N$, as desired.

  Lemma \ref{uq-cl} already showed that $\K_{\le \aleph_0}$ admit intersections. Let us check axiom (\ref{qm-4}) in Definition \ref{quasimin-def}. Let $M \in \K_{\le \aleph_0}$. We want to show that $|\gSna (M)| \le 1$. Let $p_1, p_2 \in \gSna (M)$. Say $p_\ell = \gtp (a_\ell / M; N_\ell)$, $\ell = 1,2$. We want to see that $p_1 = p_2$. Without loss of generality (since $\K_{\le \aleph_0}$ admits intersections), $N_\ell = \cl^{N_\ell} (M a_\ell)$. We show that there exists $f: N_1 \cong_M N_2$ with $f (a_1) = a_2$. We use axiom II(\ref{ii-1}), where $G, G', H, H', g, x, x'$ there stand for $M, M, N_1, N_2, \id_M, a_1, a_2$ here. We get that $\id_M \cup \{a_1, a_2\}$ is a partial embedding from $N_1$ to $N_2$. Now use axiom II(\ref{ii-2}) $\omega$-many times (as in the second paragraph of this proof) to extend this partial embedding to an isomorphism $f: N_1 \cong_M N_2$. By construction, we will have that $f(a_1) = a_2$, as desired.

  Finally, it is straightforward to see that (\ref{qm-5}) holds if and only if $\C$ is unbounded, as desired.
\end{proof}

We now examine the other direction: any quasiminimal AEC is isomorphic (as a concrete category) to a quasiminimal pregeometry class. Let us describe the proof. We start with a quasiminimal AEC $\K$. We first prove several semantic properties of this class. The class has amalgamation and joint embedding in $\aleph_0$. This is essentially because the uniqueness of the generic type together with the characterization of Galois types in Fact \ref{gtp-eq} allow us to amalgamate ``point by point''. By uniqueness of the generic type, $\K$ is also stable in $\aleph_0$. Now let us assume for simplicity that $\K$ has no maximal models in $\aleph_0$. Then stability implies that $\K$ has a saturated model $M$ of cardinality $\aleph_1$. By saturation, the closure operator $\cl^M$ inside $M$ is $\aleph_1$-homogeneous, and therefore we can apply the results of Section \ref{sec-2}. Using the finite character property of the closure operator (Fact \ref{cl-kappa}), We can conclude that $\cl^N$ is a pregeometry for any $N \in \K$. This takes care of axiom I(\ref{i-1}) in the definition of a quasiminimal pregeometry class. 

Now since the axioms of a quasiminimal pregeometry class deal with quantifier-free types, we add a relation for each Galois types (of finite length) over the empty set and expand $\K$ to a new AEC $\bigK$ where finite Galois types coincide with quantifier-free types. It is then easy to prove most of the axioms of a quasiminimal pregeometry class: only II(\ref{ii-2}), a form of $\aleph_0$-homogeneity, is problematic. It is easy to show that it holds when $H$ is empty, but at this stage we do not know whether \emph{countable} quantifier-free types coincide with Galois types. Using stability and amalgamation, we \emph{do} know that every Galois type over a countable model does not split over a finite set (in an appropriate sense). This is known to be enough to prove II(\ref{ii-2}) \cite[5.3]{quasimin-five}.

Let us implement the above description of the proof. First, quasiminimal AECs have (for countable models) amalgamation and joint embedding:

\begin{lem}\label{ap-lem}
  If $\K$ is a quasiminimal AEC, then $\K_{\le \aleph_0}$ has amalgamation and joint embedding.
\end{lem}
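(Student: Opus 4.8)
The plan is to obtain joint embedding as a corollary of amalgamation. By condition (\ref{qm-2}) of Definition \ref{quasimin-def} there is a prime model $M_0$, and $M_0$ is countable: it embeds into any countable member of $\K$ (one exists since $\LS (\K) = \aleph_0$), so $\|M_0\| \le \aleph_0$. Given $M_1, M_2 \in \K_{\le \aleph_0}$, the prime model embeds as $f_\ell : M_0 \to M_\ell$, and amalgamating $f_1, f_2$ over $M_0$ yields a common extension, which is joint embedding. So the real content is amalgamation. After replacing $M_1, M_2$ by isomorphic copies (using that $\K$ is closed under isomorphism), it suffices to amalgamate a configuration $M_0 \lea M_1$, $M_0 \lea M_2$ with all three countable.

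\textbf{Reducing amalgamation to a single point.} I would amalgamate one point at a time. Enumerate $|M_1| \backslash |M_0| = \{a_i : i < \alpha\}$ with $\alpha \le \omega$ and set $M_1^i := \cl^{M_1} (M_0 \cup \{a_j : j < i\})$; each $M_1^i \lea M_1$ because $\K$ admits intersections. (Note that condition (\ref{qm-3}) together with Theorem \ref{intersec-up} gives that \emph{all} of $\K$ admits intersections, finite models being covered directly by (\ref{qm-3}); this is what makes Fact \ref{gtp-eq} available below.) I then build a coherent increasing chain of countable models $N_i \gea M_2$ with embeddings $g_i : M_1^i \to N_i$ fixing $M_0$ pointwise, starting from $N_0 := M_2$ and $g_0 := \id_{M_0}$, and taking unions at the single limit stage $\omega$ via the Tarski--Vaught axioms. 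The final map $g_\alpha : M_1 \to N_\alpha$ together with the inclusion $M_2 \lea N_\alpha$ is the desired amalgam.

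\textbf{The single-point step (the heart of the matter).} This is the successor stage, and it is where uniqueness of the generic type enters. Writing $P := g_i[M_1^i] \lea N_i$, I must accommodate the new point $a_i$, i.e.\ amalgamate the one-point extension $M_1^{i+1} = \cl^{M_1}(M_1^i a_i)$ with $N_i$ over $M_1^i$ (transported along $g_i$ to $P$). The type $\gtp(a_i / M_1^i; M_1^{i+1})$ is nonalgebraic since $a_i \notin |M_1^i|$. If $N_i = P$, there is no room on the other side and I simply push the extension $M_1^i \lea M_1^{i+1}$ forward along the isomorphism $g_i$ to obtain $N_{i+1} \gea N_i$ with $g_{i+1}$ extending $g_i$. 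Otherwise, pick any $b \in |N_i| \backslash |P|$; then $\gtp(b / P; N_i)$ is also nonalgebraic, so by condition (\ref{qm-4}) applied to the countable model $P$ it equals the (transported) type of $a_i$. By Fact \ref{gtp-eq} there is an isomorphism $\cl^{M_1^{i+1}}(M_1^i a_i) \cong_{M_1^i} \cl^{N_i}(P b)$ sending $a_i$ to $b$; since the left-hand side is all of $M_1^{i+1}$ and the right-hand side is a $\lea$-substructure of $N_i$, this extends $g_i$ to $g_{i+1} : M_1^{i+1} \to N_i$, and I take $N_{i+1} := N_i$. Finite character (Fact \ref{cl-kappa}) keeps all the relevant closures, hence all the $N_i$, countable.

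\textbf{Main obstacle.} The only genuinely non-routine point is this single-point step: being able to match $a_i$ to an \emph{arbitrary} new point $b$ of the opposite model is exactly what uniqueness of the generic type (\ref{qm-4}) buys, and promoting the resulting equality of Galois types to an honest isomorphism over the base is exactly what Fact \ref{gtp-eq} buys. Everything else---the reduction to the configuration $M_0 \lea M_1$, $M_0 \lea M_2$, the bookkeeping of transporting extensions along the $g_i$, the coherence of the chain, and the preservation of countability at each stage---is routine.
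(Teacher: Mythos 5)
Your proof is correct, but it is organized differently from the paper's. The paper does not build the amalgam by hand: it invokes the ``in particular'' part of \cite[4.14]{ap-universal-apal}, which reduces amalgamation in a class admitting intersections to the \emph{type extension property} (every $p \in \gS(M)$ extends to some $q \in \gS(N)$ for $M \lea N$), and then verifies that property in three lines using uniqueness of the generic type; joint embedding is likewise dispatched via the prime model, as you do. Your argument is essentially the content of that cited reduction unfolded into an explicit point-by-point chain construction --- indeed the paper's introduction describes the situation in exactly these terms --- so what you lose in brevity you gain in self-containedness: your single-point step, where (\ref{qm-4}) matches $a_i$ to an arbitrary $b \in |N_i| \setminus |P|$ and Fact \ref{gtp-eq} upgrades the type equality to an isomorphism of closures over the base, is the same mechanism the paper relies on implicitly through the citation. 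Two small points to tidy up: you should note the (trivial) case $a_i \in |M_1^i|$, where $M_1^{i+1} = M_1^i$ and there is nothing to do, since your assertion that $\gtp(a_i / M_1^i; M_1^{i+1})$ is nonalgebraic presupposes $a_i \notin |M_1^i|$; and the appeal to Fact \ref{cl-kappa} for countability is unnecessary --- all the closures in sight are subsets of the countable models $M_1$ and $N_i$, and in general $|\cl^M(A)| \le |A| + \aleph_0$ already follows from $\LS(\K) = \aleph_0$ because $\cl^M(A)$ is contained in every $\lea$-substructure of $M$ containing $A$.
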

\begin{proof}
  We prove amalgamation, and joint embedding can then be obtained from the existence of the prime model and some renaming. By the ``in particular'' part of \cite[4.14]{ap-universal-apal}, it is enough to prove the so-called type extension property in $\K_{\le \aleph_0}$. This is given by the following claim: 

  \underline{Claim}: If $M \lea N$ are both in $\K_{\le \aleph_0}$ and $p \in \gS (M)$, then there exists $q \in \gS (N)$ extending $p$.

  \underline{Proof of Claim}: Say $p = \gtp (a / M; N')$. If $a \in |M|$ (i.e.\ $p$ is algebraic), let $q := \gtp (a / N; N)$. Assume now that $a \notin |M|$. If $M = N$, take $q = p$, so assume also that $M \lta N$. Let $b \in |N| \backslash |M|$ and let $p' := \gtp (b / M; N)$. By uniqueness of the generic type, $p' = p$. Therefore $q := \gtp (b / N; N)$ is as desired. $\dagger_{\text{Claim}}$
\end{proof}

We obtain the following equivalent definition of a quasiminimal AEC:

\begin{thm}
  Let $\K$ be an AEC satisfying (\ref{qm-1}), (\ref{qm-3}), and (\ref{qm-4}) from Definition \ref{quasimin-def}. Then (\ref{qm-2}) is equivalent to:

  \begin{itemize}
  \item[(\ref{qm-2})'] $\K \neq \emptyset$ and $\K_{\le \aleph_0}$ has joint embedding.
  \end{itemize}
\end{thm}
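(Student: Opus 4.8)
The plan is to prove the two implications separately; the forward direction is immediate from the material already developed, while the converse requires exhibiting an explicit candidate for the prime model.

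For (\ref{qm-2}) $\Rightarrow$ (\ref{qm-2})', note that if (\ref{qm-2}) holds then all four conditions (\ref{qm-1})--(\ref{qm-4}) hold, so $\K$ is a quasiminimal AEC in the sense of Definition \ref{quasimin-def}. Lemma \ref{ap-lem} then gives directly that $\K_{\le \aleph_0}$ has joint embedding, and $\K \neq \emptyset$ because it contains a prime model. Hence (\ref{qm-2})' holds, and this direction needs nothing beyond quoting Lemma \ref{ap-lem}.

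For the converse I would first upgrade admitting intersections from the countable case to the whole class: since $\K_{\le \aleph_0}$ admits intersections by (\ref{qm-3}), in particular $\K_{\aleph_0}$ does, so Theorem \ref{intersec-up} applied with $\lambda = \aleph_0 = \LS (\K)$ shows $\K_{\ge \aleph_0}$ admits intersections, and together with the finite models (which lie in $\K_{\le \aleph_0}$) this gives that all of $\K$ admits intersections. Consequently, for every $M \in \K$ the substructure $\cl^M (\emptyset)$ satisfies $\cl^M (\emptyset) \lea M$; moreover the Löwenheim--Skolem--Tarski axiom applied to $A = \emptyset$ together with (\ref{qm-1}) shows $\cl^M (\emptyset)$ sits inside a countable $\K$-substructure, so $M_0 := \cl^M (\emptyset)$ is countable, and by Fact \ref{cl-restr} it is \emph{absolutely} closed, i.e.\ $\cl^{M_0} (\emptyset) = |M_0|$. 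Fixing one such $M_0$ (possible since $\K \neq \emptyset$), I claim it is prime. Given an arbitrary $N \in \K$, put $N_0 := \cl^N (\emptyset) \lea N$, again a countable model with $\cl^{N_0} (\emptyset) = |N_0|$. Joint embedding in $\K_{\le \aleph_0}$ provides a countable $P$ and $\K$-embeddings $g : M_0 \to P$, $h : N_0 \to P$. The crux is that both images must equal $\cl^P (\emptyset)$: since $g$ is an isomorphism onto $g[M_0] \lea P$ and $M_0 = \cl^{M_0} (\emptyset)$, transport of the closure operator gives $\cl^{g[M_0]} (\emptyset) = g[M_0]$, and Fact \ref{cl-restr} identifies $\cl^{g[M_0]} (\emptyset)$ with $\cl^P (\emptyset)$; the same argument gives $h[N_0] = \cl^P (\emptyset)$. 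Therefore $g[M_0] = h[N_0]$, so $h^{-1} \circ g : M_0 \cong N_0$ is an isomorphism, and composing with $N_0 \lea N$ embeds $M_0$ into $N$. As $N$ was arbitrary, $M_0$ is prime and (\ref{qm-2}) holds.

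The only genuinely load-bearing step is the identification of the two images with $\cl^P (\emptyset)$ inside the amalgam $P$; this is what forces the various ``closures of $\emptyset$'' to be pairwise isomorphic and hence makes $M_0$ prime. I expect the main thing to get right is the absoluteness of $\cl(\emptyset)$ between a model and a $\K$-extension, which is precisely Fact \ref{cl-restr} and is available only once admitting intersections has been pushed up to all of $\K$ via Theorem \ref{intersec-up}; the remaining manipulations are routine bookkeeping with $\lea$ and the closure operator.
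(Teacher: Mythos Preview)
Your proof is correct and takes essentially the same route as the paper's: for the converse, both exhibit $\cl^M(\emptyset)$ as the prime model and use joint embedding in $\K_{\le \aleph_0}$ to identify the closures of $\emptyset$ across models via Fact \ref{cl-restr}. Your detour through Theorem \ref{intersec-up} to handle an arbitrary $N \in \K$ is not strictly needed---the paper simply stays in $\K_{\le \aleph_0}$ throughout (one can first pass to a countable $N' \lea N$ via the L\"owenheim--Skolem--Tarski axiom and then form $\cl^{N'}(\emptyset)$)---but it does no harm.
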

\begin{proof}
  That (\ref{qm-2}) implies (\ref{qm-2})' is given by Lemma \ref{ap-lem}. For the other direction, one can use joint embedding to see that $\cl^M (\emptyset)$ is a prime model for any $M \in \K_{\le \aleph_0}$.
\end{proof}

It directly follows that quasiminimal AECs are $\aleph_0$-stable:

\begin{lem}\label{stable-lem}
  If $\K$ is a quasiminimal AEC, then $\K$ is (Galois) stable in $\aleph_0$.
\end{lem}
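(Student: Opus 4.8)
The plan is to bound, for an arbitrary countable model $M$ in $\K$, the number of Galois types of length one over $M$, showing that $|\gS (M)| \le \aleph_0$; this is precisely what stability in $\aleph_0$ requires (recall that $\gS (M)$ consists of length-one types). I would partition $\gS (M)$ into the \emph{algebraic} types, those of the form $\gtp (a / M; N)$ with $a \in |M|$, and the nonalgebraic ones constituting $\gSna (M)$, and then bound each piece separately.

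For the algebraic types, I would first observe that if $a \in |M|$ and $M \lea N$, then $\gtp (a / M; N) = \gtp (a / M; M)$, since the inclusion $M \hookrightarrow N$ together with the identity on $N$ witnesses $\Eat$; thus every algebraic type is already realized in $M$. Next, using that $\K_{\le \aleph_0}$ admits intersections, every $M \lea N$ is closed in $N$ (Lemma \ref{intersec-closure-lem}), so for $a \in |M|$ we have $\cl^N (M a) = |M|$. By the characterization in Fact \ref{gtp-eq}, $\gtp (a / M; M) = \gtp (a' / M; M)$ holds if and only if there is an isomorphism of $\cl^M (Ma) = |M|$ onto $\cl^M (Ma') = |M|$ fixing $M$ pointwise and sending $a$ to $a'$, which forces $a = a'$. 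Hence $a \mapsto \gtp (a / M; M)$ is a bijection between $|M|$ and the set of algebraic types, giving at most $\|M\| \le \aleph_0$ of them.

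For the nonalgebraic types, axiom (\ref{qm-4}) of Definition \ref{quasimin-def} directly yields $|\gSna (M)| \le 1$. Combining the two bounds, $|\gS (M)| \le \|M\| + 1 \le \aleph_0$, so $\K$ is stable in $\aleph_0$, as desired.

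The argument is essentially bookkeeping, and no step is a genuine obstacle. The only point requiring care is the identification of algebraic types with elements of $M$, which hinges on $M$ being closed in its extensions (so that the relevant closures collapse to $|M|$) and on Fact \ref{gtp-eq}. I note that amalgamation and joint embedding from Lemma \ref{ap-lem} are not strictly needed for this counting, although they ensure the surrounding type-theoretic machinery behaves as expected.
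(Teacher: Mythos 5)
Your proof is correct and is essentially the paper's own argument: the paper's proof is the single line ``By uniqueness of the generic type,'' which implicitly relies on exactly the counting you spell out (at most $\|M\|$ algebraic types via Fact \ref{gtp-eq}, plus at most one nonalgebraic type by axiom (\ref{qm-4})). Your expanded bookkeeping, including the identification of algebraic types with elements of $M$ via the closure characterization, is a valid and faithful elaboration of that one-liner.
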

\begin{proof}
  By uniqueness of the generic type.
\end{proof}

We can now show that the closure operator in a quasiminimal AEC satisfies exchange:

\begin{thm}\label{pregeom-thm}
  If $\K$ is a quasiminimal AEC, then $\K$ admits intersections and for any $N \in \K$, $(|N|, \cl^N)$ is a pregeometry whose closed sets are exactly the $\K$-substructures of $N$.
\end{thm}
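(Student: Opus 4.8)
The plan is to handle the three assertions in turn — that $\K$ admits intersections, that each $\cl^N$ has exchange, and that the closed sets are exactly the $\K$-substructures — reducing everything to the exchange axiom, which is the only real content.

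The first and third assertions are quick. For admitting intersections I would apply Theorem \ref{intersec-up} with $\lambda = \LS(\K) = \aleph_0$: axiom (\ref{qm-3}) gives that $\K_{\aleph_0}$ admits intersections, so the ``in particular'' clause yields that $\K_{\ge \aleph_0}$ admits intersections, and together with (\ref{qm-3}) this covers all of $\K$. Once this is in hand, Fact \ref{intersec-charact} identifies the closed sets of $(|N|, \cl^N)$ with the universes of the $M \lea N$, which is the third assertion, and Fact \ref{cl-kappa} gives $\kappa((|N|, \cl^N)) = \aleph_0$, i.e.\ finite character. So it remains only to prove exchange.

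For exchange I would first reduce to countable models. Fixing $N$, finite character and Lemma \ref{exchange-lem}(3) let me restrict to finite $A$; given $a \in \cl^N(Ab) \backslash \cl^N(A)$, I set $N_0 := \cl^N(Aab) \lea N$, and since $\K$ admits intersections, Fact \ref{cl-restr} shows $\cl^{N_0}$ agrees with $\cl^N$ on subsets of $|N_0|$, so it suffices to prove exchange in the countable model $N_0$. Next I would transfer exchange \emph{down} along embeddings: if $f : N_0 \to \bigM$ is a $\K$-embedding, then $f$ is an isomorphism onto $f[N_0] \lea \bigM$, and since $\K$-isomorphisms permute $\K$-substructures, $f$ carries $\cl^{N_0}$ to $\cl^{f[N_0]}$, which by Fact \ref{cl-restr} equals the restriction of $\cl^{\bigM}$; hence exchange in $\bigM$ implies exchange in $N_0$. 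The whole problem thus becomes: produce a single $\bigM$ with exchange into which every countable member of $\K$ embeds.

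The crux is to build such a $\bigM$ and apply Corollary \ref{pregeom-cor}. Since $\kappa((|\bigM|, \cl^{\bigM})) = \aleph_0$ and $\LS((|\bigM|, \cl^{\bigM})) = \aleph_0$, that corollary reduces exchange in $\bigM$ to $\aleph_1$-homogeneity of the closure space $(|\bigM|, \cl^{\bigM})$ — notably the size of $\bigM$ is irrelevant. I would obtain homogeneity from uniqueness of the generic type: for countable $A \subseteq |\bigM|$ the model $M_A := \cl^{\bigM}(A) \lea \bigM$ is countable, so any $a, b \notin M_A$ realize the unique (by (\ref{qm-4})) nonalgebraic type over $M_A$, whence by Fact \ref{gtp-eq} there is an isomorphism $\cl^{\bigM}(M_A a) \cong_{M_A} \cl^{\bigM}(M_A b)$ sending $a$ to $b$, which model-homogeneity of $\bigM$ extends to an automorphism (a closure-space automorphism fixing $A$). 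In the unbounded case I would take $\bigM$ to be a saturated model of size $\aleph_1$, available from $\aleph_0$-stability (Lemma \ref{stable-lem}) together with amalgamation and joint embedding (Lemma \ref{ap-lem}), and universal over countable models. The main obstacle I anticipate is the bounded case, where (\ref{qm-5}) fails: one first checks that this forces every model to be countable — otherwise, repeatedly adjoining an element outside the current countable closure (countable since $\LS(\K)=\aleph_0$) builds a strictly increasing $\omega$-chain — and then no model of size $\aleph_1$ need exist. Here I would instead take $\bigM$ to be a $\lea$-maximal model above the given $N_0$, which exists because the absence of strictly increasing $\omega$-chains lets a Zorn/Tarski--Vaught argument produce maximal models; the delicate point is that re-running the homogeneity argument now requires a back-and-forth whose two sides remain balanced, and verifying that such a maximal model is genuinely $\aleph_1$-homogeneous (so that Corollary \ref{pregeom-cor} and the downward transfer apply uniformly) is where the real work of the bounded case lies.
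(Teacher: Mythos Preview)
Your approach is essentially the paper's: reduce to a countable model, pass up to an $\aleph_1$-saturated extension $N'$, use uniqueness of the generic type to get $\aleph_1$-homogeneity of $(|N'|,\cl^{N'})$, apply Corollary~\ref{pregeom-cor}, and pull exchange back down via Fact~\ref{cl-restr}. The paper does not split into bounded/unbounded cases; it simply asks for $N' \in \K_{\le \aleph_1}$ with $N \lea N'$ and $N'$ $\aleph_1$-saturated, remarking parenthetically that if there is a countable maximal model above $N$, that model already serves as $N'$.

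The ``real work'' you flag in the bounded case is shorter than you expect, and the paper glosses over it just as tersely. A maximal $N'$ is automatically $\aleph_1$-saturated: over any $M_0 \lea N'$ the algebraic types are realized in $M_0$, and if $M_0 \lta N'$ the unique nonalgebraic type is realized by any element of $|N'| \setminus |M_0|$, while if $M_0 = N'$ maximality forces $\gSna(N') = \emptyset$. For $\aleph_1$-homogeneity of the closure space, run the back-and-forth you already outlined: given $f_n : M_1 \cong M_2$ between submodels of $N'$ and $c \in |N'| \setminus |M_1|$, you need some $d \in |N'| \setminus |M_2|$. The only obstruction is $M_2 = N'$; but then $M_1 \cong N'$ is itself maximal, so $M_1 \lta N'$ is impossible and $c$ was already in $M_1$. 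Thus the back-and-forth never gets stuck, and your worry dissolves. (Incidentally, you do not need a single universal $\bigM$; one $N'$ above each countable $N$ suffices, as your own bounded-case argument implicitly concedes.)
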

\begin{proof}
  That $\K$ admits intersection is Theorem \ref{intersec-up}. Now let $N \in \K$ and let $W := (|N|, \cl^N)$. By Lemma \ref{intersec-closure-lem}, $W$ is a closure space and by Fact \ref{intersec-charact}, its closed sets are exactly the $\K$-substructures of $N$. By Fact \ref{cl-kappa}, $\kappa (W) = \aleph_0$, i.e.\ $W$ has finite character. It remains to see that $W$ has exchange. Let $a, b \in |N|$ and let $A \subseteq |N|$. Assume that $a \in \cl^N (Ab) \backslash \cl^N (A)$. We want to see that $b \in \cl^N (Aa)$. By finite character we can assume without loss of generality that $|A| \le \aleph_0$. Using the Löwenheim-Skolem-Tarski axiom, we may also assume that $N \in \K_{\le \aleph_0}$.

  Using stability, let $N' \in \K_{\le \aleph_1}$ be such that $N \lea N'$ and $N'$ is $\aleph_1$-saturated (this can be done even if there is a countable maximal model above $N$. In this case such a maximal model will be the desired $N'$). Then $W' := (|N'|, \cl^{N'})$ is a closure space with $\kappa (W') = \aleph_0$ which (using uniqueness of the generic type) is $\aleph_1$-homogeneous. Therefore by Corollary \ref{pregeom-cor}, $W'$ satisfies exchange. It follows immediately (see Fact \ref{cl-restr}) that $W$ also satisfies exchange.
\end{proof}

In particular, exchange is not necessary in the definition of a quasiminimal pregeometry class:

\begin{cor}\label{exchange-not-necessary}
  If $\C$ satisfies all the axioms of a quasiminimal pregeometry class except that in I(\ref{i-1}) $\cl_H$ may not have exchange, then $\C$ is a quasiminimal pregeometry class.
\end{cor}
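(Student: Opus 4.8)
The plan is to observe that the corollary is essentially a repackaging of the three preceding results, so no genuinely new work is required: I will assemble Theorem \ref{qm-aec}, Theorem \ref{pregeom-thm}, and Lemma \ref{uq-cl}. The hypothesis of the corollary — that $\C$ satisfies every axiom of a quasiminimal pregeometry class except possibly exchange in I(\ref{i-1}) — is exactly the hypothesis under which Theorem \ref{qm-aec} was stated, and the only axiom left to verify in order to conclude that $\C$ is a genuine quasiminimal pregeometry class is that each $\cl_H$ has exchange (finite character and the countability of closures of finite sets are already part of the assumed portion of I(\ref{i-1}), and all of 0, I(\ref{i-2}), I(\ref{i-3}), II, and IV are assumed outright).

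First I would apply Theorem \ref{qm-aec} to the given $\C$ to conclude that $\K := \K (\C)$ is a quasiminimal AEC. Next I would invoke Theorem \ref{pregeom-thm} on $\K$: it yields that $\K$ admits intersections and that for every $N \in \K$ the closure space $(|N|, \cl^N)$ is a pregeometry; in particular $\cl^N$ satisfies the exchange axiom. Finally, for each $(H, \cl_H) \in \C$, Lemma \ref{uq-cl} tells me that $\cl_H$ is the unique closure operator making $(H, \cl_H)$ a member of $\C$ and, crucially, that it coincides with the intersection closure, $\cl_H = \cl^H$. Combining this identification with the exchange property just established, I conclude that each $\cl_H$ has exchange, so that $(|H|, \cl_H)$ is in fact a pregeometry and I(\ref{i-1}) holds in full. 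Since all the remaining axioms were assumed, $\C$ is a quasiminimal pregeometry class.

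The only point requiring any care is the identification $\cl_H = \cl^H$, i.e.\ that the (a priori syntactically given) closure operator attached to members of $\C$ is the same as the semantic intersection closure computed inside the abstract class $\K (\C)$; but this is precisely the content of Lemma \ref{uq-cl}, which derives it from axioms I(\ref{i-2}) and I(\ref{i-3}). I therefore do not expect a substantive obstacle here: the difficulty of the result is entirely concentrated in Theorem \ref{pregeom-thm} (and through it in the closure-space analysis of Section \ref{sec-2}), and this corollary merely transports that conclusion back to the setting of $\C$.
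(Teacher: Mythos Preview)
Your proposal is correct and follows exactly the paper's own proof: invoke Theorem \ref{qm-aec} to make $\K(\C)$ a quasiminimal AEC, apply Theorem \ref{pregeom-thm} to obtain that $(|M|,\cl^M)$ is a pregeometry for every $M\in\K$, and then use Lemma \ref{uq-cl} to identify $\cl_H=\cl^H$ and conclude. Your additional remarks about which axiom remains to be checked and why Lemma \ref{uq-cl} supplies the needed identification are accurate elaborations of the paper's terse three-line argument.
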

\begin{proof}
  By Theorem \ref{qm-aec}, $\K (\C)$ is a quasiminimal AEC. By Theorem \ref{pregeom-thm}, $(|M|, \cl^M)$ is a pregeometry for every $M \in \K$. The result now follows from Lemma \ref{uq-cl}.
\end{proof}
\begin{remark}
  A referee pointed out that \emph{if} we assume in addition that any countable model is contained in a model $M \in \K$ such that $M = \cl^M (X)$ with $|X|$ infinite and $\cl^M (Y_1) \cap \cl^M (Y_2) = \cl^M (Y_1 \cap Y_2)$ for any $Y_1, Y_2 \subseteq X$ (this is Zilber's definition of an independent set in \cite[1.2]{zil05}), \emph{then} exchange can be proven as follows: by ``successive renaming of $X$'' (see the proof of \cite[Theorem 2]{zil05}), one can prove that $\C$ contains a model of cardinality $\aleph_2$ (and indeed of any cardinality). Then one can apply the results from \cite[2.8]{itai-tsuboi-wakai} (or Corollary \ref{pregeom-cor} here). Corollary \ref{exchange-not-necessary} is much stronger, as it does not even assume that $\C$ is unbounded, let alone that the infinite-dimensional model satisfies a weak version of exchange.
\end{remark}

In order to prove that axiom II(\ref{ii-2}) holds in an appropriate expansion of $\K$, we will use that the members of $\K$ are homogeneous for finite Galois types:

\begin{lem}\label{hom-lem}
  Let $\K$ be a quasiminimal AEC and let $M \in \K$. Let $\ba_1, \ba_2, \bb_1 \in \fct{<\omega}{|M|}$. If $\gtp (\ba_1 / \emptyset; M) = \gtp (\ba_2 / \emptyset; M)$, then there exists $\bb_2 \in \fct{<\omega}{|M|}$ such that $\gtp (\ba_1 \bb_1 / \emptyset; M) = \gtp (\ba_2 \bb_2 / \emptyset; M)$.
\end{lem}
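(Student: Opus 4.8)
My plan is to reformulate the conclusion via Fact~\ref{gtp-eq} and then build a witnessing isomorphism one entry of $\bb_1$ at a time. Recall from Theorem~\ref{pregeom-thm} that $\K$ admits intersections and that $(|M|, \cl^M)$ is a pregeometry whose closed sets are the $\K$-substructures of $M$. Fact~\ref{gtp-eq} (applied with $A = \emptyset$) then says that $\gtp(\ba_1 \bb_1 / \emptyset; M) = \gtp(\ba_2 \bb_2 / \emptyset; M)$ holds \emph{precisely} when there is an isomorphism $\cl^M(\ba_1 \bb_1) \cong \cl^M(\ba_2 \bb_2)$ sending $\ba_1 \bb_1$ to $\ba_2 \bb_2$; in particular, the hypothesis already hands us an isomorphism $g_0 : \cl^M(\ba_1) \cong \cl^M(\ba_2)$ with $g_0(\ba_1) = \ba_2$. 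Writing $\bb_1 = \langle b_1^i : i < k \rangle$ and abbreviating $b_1^{<j} := \langle b_1^i : i < j \rangle$, I would produce by induction on $j \le k$ entries $b_2^i \in |M|$ and isomorphisms
$$g_j : \cl^M(\ba_1 b_1^{<j}) \cong \cl^M(\ba_2 b_2^{<j})$$
extending $g_0$ and sending $\ba_1 b_1^{<j}$ to $\ba_2 b_2^{<j}$. Taking $\bb_2 := \langle b_2^i : i < k \rangle$, the final map $g_k$ and Fact~\ref{gtp-eq} deliver the desired equality of types.

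For the inductive step, set $P_1 := \cl^M(\ba_1 b_1^{<j})$ and $P_2 := \cl^M(\ba_2 b_2^{<j})$; these are countable members of $\K$ (as $\LS(\K) = \aleph_0$) with $P_1, P_2 \lea M$, and we put $c := b_1^j$. If $c \in P_1$, I set $b_2^j := g_j(c) \in P_2$, leaving both closures unchanged, so $g_{j+1} := g_j$ works. The substantive case is $c \notin P_1$, in which $\gtp(c / P_1; M)$ is nonalgebraic. I would first transport the extension $P_1 \lea \cl^M(P_1 c)$ along $g_j$ — using only that $\K$ respects isomorphisms — obtaining $R \in \K$ with $P_2 \lea R$ and an isomorphism $\widehat{g_j} : \cl^M(P_1 c) \cong R$ extending $g_j$; since $\widehat{g_j}$ is an isomorphism of the induced closure spaces, $R = \cl^R(P_2 c'')$ for $c'' := \widehat{g_j}(c) \notin P_2$. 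Supposing for the moment that some $c' \in |M| \setminus P_2$ is available, the type $\gtp(c'/P_2; M)$ is also nonalgebraic, so uniqueness of the generic type (Definition~\ref{quasimin-def}(\ref{qm-4}), legitimate as $P_2$ is countable) forces $\gtp(c''/P_2; R) = \gtp(c'/P_2; M)$. Fact~\ref{gtp-eq} then provides an isomorphism $R = \cl^R(P_2 c'') \cong \cl^M(P_2 c')$ fixing $P_2$ and sending $c''$ to $c'$; composing it with $\widehat{g_j}$ yields $g_{j+1}$, and I set $b_2^j := c'$.

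The main obstacle is precisely securing the new entry $c'$ \emph{inside $M$}, not merely in some extension. This is where the full force of Theorem~\ref{pregeom-thm} — that $\cl^M$ genuinely satisfies exchange, so that dimension is well defined — enters, through a dimension count. As $g_j$ is an isomorphism of closure spaces it preserves independence, whence $\dim P_1 = \dim P_2$; moreover both are finite, each $P_\ell$ being the closure of a finite set. Since $c \notin P_1$, the closed set $\cl^M(P_1 c) \lea M$ has dimension $\dim P_1 + 1$, so $\dim M \ge \dim P_1 + 1 > \dim P_2$; a $\K$-substructure of strictly smaller finite dimension cannot be all of $M$, hence $P_2 \neq M$ and a suitable $c' \in |M| \setminus P_2$ exists. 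Throughout, Fact~\ref{cl-restr} and finite character (Fact~\ref{cl-kappa}) are invoked freely to identify closures computed in $M$ with those computed in its submodels. I expect the only delicate points to be the bookkeeping of the transport step and the strictness of the dimension inequality (which is exactly what excludes the degenerate case $P_2 = M$); the remainder is a routine back-and-forth driven by uniqueness of the generic type.
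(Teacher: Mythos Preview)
Your proof is correct and follows the same strategy as the paper's: reduce (via induction) to extending by a single element, split into the algebraic and nonalgebraic cases, and in the latter use Fact~\ref{gtp-eq} together with uniqueness of the generic type. The paper's terse clause ``$M_1 \neq M$, and so $M_2 \neq M$'' is precisely your dimension count via Theorem~\ref{pregeom-thm}, which you make explicit.
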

\begin{proof}
  It is enough to prove the result when $\ell (\bb_1) = 1$. Let $M_\ell := \cl^{M} (\ba_\ell)$ for $\ell = 1,2$. By Fact \ref{gtp-eq}, there exists $f: M_1 \cong M_2$ such that $f (\ba_1) = \ba_2$. If $b_1 \in |M_1|$, let $b_2 := f (b_1)$ and check that this works. If $b_1 \in |M| \backslash |M_1|$, then $M_1 \neq M$, and so $M_2 \neq M$, so pick any $b_2 \in |M| \backslash |M_2|$. By uniqueness of the generic type, Fact \ref{gtp-eq}, and some renaming, there is an extension $g: \cl^M (M_1 b_1) \cong \cl^M (M_2 b_2)$ of $f$ sending $b_1$ to $b_2$, as desired.
\end{proof}

We will also use the following very general fact. The proof is similar to \cite[I.5.6]{shelahaecbook} or \cite[4.2]{quasimin-five}.

\begin{fact}\label{ns-fact}
  Let $\K$ be an AEC with $\LS (\K) = \aleph_0$ such that $\K_{\le \aleph_0}$ has amalgamation and $\K$ is stable in $\aleph_0$. Let $M \in \K_{\le \aleph_0}$. If $M$ satisfies the conclusion of Lemma \ref{hom-lem} (e.g.\ if $M$ is limit or if $\K$ is quasiminimal), then for any $p \in \gS^{<\omega} (M)$, there exists a finite $A \subseteq |M|$ such that $p$ does not split over $A$. That is, whenever $p = \gtp (\bb / M; N)$, if $\bb_1, \bb_2 \in \fct{<\omega}{|M|}$ are such that $\gtp (\bb_1 / A; N) = \gtp (\bb_2 / A; N)$, then $\gtp (\bb \bb_1  / A; N) = \gtp (\bb \bb_2 / A; N)$.
\end{fact}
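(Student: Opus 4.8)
The plan is to argue by contradiction, manufacturing $2^{\aleph_0}$ distinct Galois types over a single countable model to violate stability in $\aleph_0$. Suppose $p = \gtp (\bb / M; N)$ splits over every finite subset of $|M|$. Enumerate $|M| = \{m_i : i < \omega\}$ and set $A_n := \{m_i : i < n\}$, so $\seq{A_n : n < \omega}$ is increasing with union $|M|$. For each $n$, splitting over $A_n$ furnishes $\bb_1^n, \bb_2^n \in \fct{<\omega}{|M|}$ with $\gtp (\bb_1^n / A_n; N) = \gtp (\bb_2^n / A_n; N)$ but $\gtp (\bb \bb_1^n / A_n; N) \neq \gtp (\bb \bb_2^n / A_n; N)$. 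I would then work inside a large saturated model $\sea \gea N$, which exists by stability and amalgamation in $\aleph_0$ (in the degenerate case where no such model exists one runs the same construction using amalgamation in $\aleph_0$ directly).

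The first substantive step is to convert each instance of splitting into an automorphism. Since $\gtp (\bb_1^n / A_n) = \gtp (\bb_2^n / A_n)$ and $A_n$ is finite, the conclusion of Lemma \ref{hom-lem} (transferred to $\sea$ via its saturation) allows a back-and-forth that extends the partial map fixing $A_n$ pointwise and sending $\bb_1^n$ to $\bb_2^n$ to an automorphism $f_n \in \Aut (\sea)$ with $f_n \rest A_n = \id$. This is the one place the homogeneity hypothesis is essential: it is precisely what makes finite sets behave like submodels, so that splitting over a finite set can be realized by an automorphism fixing that set. Because $f_n$ fixes $A_n$, applying it to $\gtp (\bb \bb_1^n / A_n)$ gives $\gtp (f_n (\bb) \bb_2^n / A_n) = \gtp (\bb \bb_1^n / A_n) \neq \gtp (\bb \bb_2^n / A_n)$, so that $\gtp (f_n (\bb) / A_n \bb_2^n) \neq \gtp (\bb / A_n \bb_2^n)$; in particular $f_n$ moves $\gtp (\bb / M)$, the discrepancy being witnessed by the finite set $A_n \cup \bb_2^n \subseteq |M|$.

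Next I would assemble these automorphisms into a coherent binary tree. Building simultaneously an increasing continuous chain $\seq{M^n : n < \omega}$ of countable models (with $M^0 = M$) and elements $\bb_\eta \in |\sea|$ for $\eta \in \fct{<\omega}{2}$ (each an image of $\bb$ under a finite composition of the $f_n$, hence a copy of $p$), I want to arrange for every $\eta \in \fct{n}{2}$ that (i) both children realize the parent's type over the lower model, $\gtp (\bb_{\eta^\frown 0} / M^n) = \gtp (\bb_{\eta^\frown 1} / M^n) = \gtp (\bb_\eta / M^n)$, and (ii) the children are separated higher up, $\gtp (\bb_{\eta^\frown 0} / M^{n+1}) \neq \gtp (\bb_{\eta^\frown 1} / M^{n+1})$. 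Here the separating witness from the previous paragraph is placed into $M^{n+1}$ to secure (ii), while $M^n$ is kept small enough (its preimage under the relevant automorphism controlled by $A_n$) that $f_n$ does not yet move the type over $M^n$, securing (i). Setting $M^\omega := \bigcup_{n < \omega} M^n$, a countable model, property (i) makes $\seq{\gtp (\bb_{\nu \rest n} / M^n) : n < \omega}$ a coherent increasing chain for each branch $\nu \in \fct{\omega}{2}$, so by the chain axioms it has a limit $q_\nu \in \gS (M^\omega)$; property (ii) makes $q_\nu \neq q_{\nu'}$ whenever $\nu \neq \nu'$, since their first level of disagreement is detected already inside some $M^{n+1} \lea M^\omega$. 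This yields $2^{\aleph_0}$ distinct types over the countable model $M^\omega$, contradicting stability in $\aleph_0$.

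I expect the main obstacle to be the simultaneous bookkeeping in the tree construction: guaranteeing condition (i) (the two children agree over $M^n$) together with condition (ii) (they are separated over $M^{n+1}$). The tension is that the same automorphism $f_n$ must fix the type over $M^n$ yet move it over $M^{n+1}$, which forces a careful interleaving of how much of $\sea$ is allowed into $M^n$ against where the splitting witness $\bb_2^n$ is placed. This is exactly where it is crucial that $p$ splits over \emph{every} finite subset of $M$ (so a fresh separating witness is always available outside the current model) and where the homogeneity hypothesis is indispensable (to turn finite-set splitting into genuine automorphisms). The remaining verifications—existence of the saturated model, coherence of the limit types, and the final counting—are routine and follow the arguments of \cite[I.5.6]{shelahaecbook} and \cite[4.2]{quasimin-five}.
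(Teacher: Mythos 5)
Your overall strategy is the right one --- it is the standard tree argument behind the sources the paper cites for this Fact (\cite[I.5.6]{shelahaecbook}, \cite[4.2]{quasimin-five}): derive $2^{\aleph_0}$ Galois types over a countable model from everywhere-splitting, using the conclusion of Lemma \ref{hom-lem} to turn equality of Galois types over finite sets into automorphisms of a saturated extension. However, the tree construction as you set it up cannot be carried out, and the obstruction sits exactly at the point you flag as ``the main obstacle''. You start the chain with $M^0 = M$, so every $M^n$ contains $M$ and hence already contains every splitting witness $A_n \cup \operatorname{ran}(\bb_2^n)$, since splitting over a finite subset of $M$ only ever produces witnesses $\bb_1^n, \bb_2^n \in \fct{<\omega}{|M|}$. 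But if $\gtp(\bb_{\eta^\frown 0}/M^n) = \gtp(\bb_{\eta^\frown 1}/M^n)$, then by monotonicity the two tuples have the same Galois type over every subset of $M^n$, in particular over $A_n \cup \operatorname{ran}(\bb_2^n)$ --- which is precisely the set you say witnesses their separation over $M^{n+1}$. So your conditions (i) and (ii) are jointly unsatisfiable once $M \subseteq M^n$. Your proposed escape (``a fresh separating witness is always available outside the current model'') is not available: splitting of $p$ over finite subsets of $M$ never produces witnesses outside $M$, and the types $\gtp(\bb_\eta/M^n)$ over the enlarged models are not known to split over anything.

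The fix is to keep $M$ out of the chain and run the tree over finite sets. Take $A_0 = \emptyset$, let $A_{n+1}$ contain $A_n$ together with the witnesses $\bb_1^n, \bb_2^n$ of splitting over $A_n$, and for $\eta \in \fct{n}{2}$ track automorphisms $g_\eta$ of $\sea$ with $g_{\eta^\frown 0} = g_\eta$ and $g_{\eta^\frown 1} = g_\eta \circ h_n$, where $h_n$ fixes $A_n$ pointwise and sends $\bb_2^n$ to $\bb_1^n$. The computation you correctly carry out in your second paragraph then gives $\gtp(g_{\eta^\frown 0}(\bb)/g_\eta[A_n]) = \gtp(g_{\eta^\frown 1}(\bb)/g_\eta[A_n])$ while the two types differ over the finite set $g_\eta[A_{n+1}]$. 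The countable model over which you count types is one containing $\bigcup_\eta g_\eta[A_{|\eta|+1}]$; it is generated by the images of the finite sets and need not contain $M$ at all, which is harmless since stability is contradicted by continuum many types over \emph{any} countable model. The coherence/direct-limit step along a branch $\nu$ is then applied to the increasing finite sets $g_{\nu\restriction n}[A_n]$ (or the countable models they generate), where the required agreement at each level genuinely holds. With that restructuring, the remaining ingredients of your sketch --- the saturated extension, the automorphisms obtained from Lemma \ref{hom-lem}, the limit types, and the final count --- go through as in the cited proofs.
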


We can now state and prove the correspondence between quasiminimal AECs and quasiminimal pregeometry classes. The idea is to add a relation to the language for each finite Galois type, and expand the models accordingly. This is a functorial process: the resulting class is isomorphic (as a category to the original one). This expansion is what we call the $(<\aleph_0)$-Galois-Morleyization \cite[3.3]{sv-infinitary-stability-afml}.

\begin{defin}\label{def-galois-m}
Let $\K = (K, \lea)$ be an AEC. Define an expansion $\bigtau$ of $\tau (K)$ by adding a relation symbol $R_p$ of arity $\ell (p)$ for each $p \in \gS^{<\omega} (\emptyset)$. Expand each $N \in K$ to a $\bigtau$-structure $\bigN$ by specifying that for each $\ba \in \fct{<\omega}{|\bigN|}$, $R_p^{\bigN} (\ba)$ (where $R_p^{\bigN}$ is the interpretation of $R_p$ inside $\bigN$) holds exactly when $\gtp (\ba / \emptyset; N) = p$. Let $\widehat{K}$ be the class of all such $\bigN$. For $\bigM, \bigN \in \widehat{K}$, write $\bigM \leap{\bigK} \bigN$ if $\bigM \subseteq \bigN$ and $\bigM \rest \tau (\K) \lea \bigN \rest \tau (\K)$. We call $\bigK := (\widehat{K}, \leap{\bigK})$ the \emph{$(<\aleph_0)$-Galois Morleyization} of $\K$.
\end{defin}

The basic facts about the Galois Morleyization that we will use are below. The most important says that finite Galois types are the same as quantifier-free types in the Galois Morleyization.

\begin{fact}\label{morleyization-facts}
  Let $\K$ be an AEC and let $\bigK = (\sbigK, \leap{\bigK})$ be its $(<\aleph_0)$-Galois Morleyization.

  \begin{enumerate}
  \item\cite[3.4]{sv-infinitary-stability-afml} $|\tau (\bigK)| = |\gS^{<\omega} (\emptyset)| + |\tau (\K) |$.
  \item\cite[3.5]{sv-infinitary-stability-afml} $\bigK$ is a functorial expansion of $\K$. This means that the reduct map is an isomorphism of concrete categories from $\bigK$ onto $\K$. In particular, $\bigK$ is an AEC with $\LS (\bigK) = \LS (\K) + |\tau (\bigK)|$.
  \item\label{morleyization-3} \cite[3.12, 3.16]{sv-infinitary-stability-afml} For any $N_1, N_2$, any $A \subseteq |N_1| \cap |N_2|$, and any $\bb_1 \in \fct{<\omega}{|N_1|}$, $\bb_2 \in \fct{<\omega}{|N_2|}$. If $\gtp (\bb_1 / A; N_1) = \gtp (\bb_2 / A; N_2)$, then the quantifier-free type of $\bb_1$ over $A$ in $N_1$ equals the quantifier-free type of $\bb_2$ over $A$ in $N_2$. If $A$ is finite, the converse also holds.
  \end{enumerate}
\end{fact}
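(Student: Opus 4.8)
The three parts all rest on two elementary invariance properties of Galois types that I would isolate at the outset: (i) Galois types over $\emptyset$ are preserved by $\K$-embeddings, i.e.\ if $f : M \to N$ is a $\K$-embedding and $\ba \in \fct{<\omega}{|M|}$, then $\gtp(\ba/\emptyset; M) = \gtp(f(\ba)/\emptyset; N)$, witnessed by the pair $(f, \id_N)$ in the definition of $\Eat$; and (ii) for finite $A \subseteq |N_1| \cap |N_2|$, enumerated as $\ba$, one has $\gtp(\bb_1/A; N_1) = \gtp(\bb_2/A; N_2)$ if and only if $\gtp(\ba\bb_1/\emptyset; N_1) = \gtp(\ba\bb_2/\emptyset; N_2)$. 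Property (ii) is the technical heart: the forward direction is immediate since maps fixing $A$ pointwise also fix the tuple $\ba$, but the converse requires that an amalgam witnessing equality over $\emptyset$ — whose embeddings agree on $A$ but move it to some image $A'$ — be ``relabelled'' back so as to fix $A$ pointwise. I would do this by replacing the amalgam $N$ with an isomorphic copy $N^*$ chosen so that the common image of $A$ is carried back onto $A$ by the inverse bijection (legitimate since $K$ is closed under isomorphism), then propagate along the transitive-closure chain.

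Part (1) is then pure bookkeeping: $\bigtau$ adds exactly one relation symbol $R_p$ per $p \in \gS^{<\omega}(\emptyset)$, so $|\tau(\bigK)| = |\tau(\K)| + |\gS^{<\omega}(\emptyset)|$. For Part (2), I would first note that the interpretation of each $R_p$ in $\bigN$ is by definition a function of $N$ alone, so each $N$ has a \emph{unique} expansion $\bigN$. Invariance (i) then shows that a map $f$ is a $\K$-embedding $M \to N$ exactly when it is a $\bigtau$-embedding $\bigM \to \bigN$ (it preserves and reflects every $R_p$); in particular $\widehat{K}$ is closed under isomorphism and $M \lea N \iff \bigM \leap{\bigK} \bigN$, which is precisely the statement that the reduct is an isomorphism of concrete categories. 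The AEC axioms for $\bigK$ I would transfer from $\K$ coordinate-wise: the partial-order and coherence axioms are immediate from the reduct, and the only axiom needing a small computation is the Tarski–Vaught union axiom. For a $\leap{\bigK}$-directed system $\seq{\bigM_i : i \in I}$ with reduct-union $M$, invariance (i) applied to each $M_i \lea M$ gives $R_p^{\bigM_i} = R_p^{\bigM} \cap \fct{<\omega}{|M_i|}$, so taking unions (every finite tuple lands in some $M_i$ by directedness) yields $\bigcup_{i} \bigM_i = \bigM \in \widehat{K}$. Finally, since the expansion leaves universes unchanged, any $\K$-witness to the Löwenheim–Skolem–Tarski property is a $\bigK$-witness and conversely; as the LST number of any AEC is at least $|\tau| + \aleph_0$, this pins it to $\LS(\bigK) = \LS(\K) + |\tau(\bigK)|$.

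Part (3) is where I would spend the most care. For the forward direction (arbitrary $A$), suppose $\gtp(\bb_1/A; N_1) = \gtp(\bb_2/A; N_2)$ and fix $\Eat$-witnesses $f_\ell$ fixing $A$ pointwise with $f_1(\bb_1) = f_2(\bb_2)$ (passing through the transitive-closure chain). Since embeddings preserve $\tau(\K)$-quantifier-free types, the $\tau(\K)$-parts of the two quantifier-free $\bigtau$-types agree; for the $R_p$-part, every atomic $\bigtau$-formula over $A$ is $R_p(\bc)$ for a finite sub-tuple $\bc$ of $\bb A$, and the $f_\ell$ carry corresponding sub-tuples $\bc_1, \bc_2$ to a common tuple, so by invariance (i) $\gtp(\bc_1/\emptyset; N_1) = \gtp(\bc_2/\emptyset; N_2)$, i.e.\ $R_p$ holds of one iff of the other. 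For the converse with $A$ finite I would use a single well-chosen relation: setting $q := \gtp(\ba\bb_1/\emptyset; N_1)$ (a finite type, so $R_q \in \bigtau$), the formula $R_q(\ba, \bx)$ lies in the quantifier-free $\bigtau$-type of $\bb_1$ over $A$ in $\bigN_1$; equality of quantifier-free types then forces $\bigN_2 \models R_q(\ba, \bb_2)$, i.e.\ $\gtp(\ba\bb_2/\emptyset; N_2) = q = \gtp(\ba\bb_1/\emptyset; N_1)$, and invariance (ii) converts this back to $\gtp(\bb_1/A; N_1) = \gtp(\bb_2/A; N_2)$. This is exactly the place where finiteness of $A$ is used, since only then is $q$ a symbol of $\bigtau$.

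The main obstacle is the converse half of invariance (ii) — passing from equality of the $\emptyset$-type of $\ba\bb$ to equality of the $A$-type of $\bb$ — because the amalgam furnished by the $\Eat$-relation need not fix $A$ pointwise. The relabelling step that repairs this, and its propagation along the transitive-closure chain, is the one genuinely nontrivial point; I would take care to construct the isomorphic copy $N^*$ so that the repaired embeddings simultaneously fix all of $A$ and still identify the images of $\bb_1$ and $\bb_2$. Everything else reduces to invariance (i) and closure of $K$ under isomorphism.
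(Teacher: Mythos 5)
This item is a \emph{Fact} in the paper: it is imported wholesale from [3.4, 3.5, 3.12, 3.16] of the Galois Morleyization paper (\texttt{sv-infinitary-stability-afml}) and is given no proof in the present text, so there is no in-paper argument to compare against. Your reconstruction is essentially the standard proof from that reference, and it is correct in all essentials: part (1) is indeed bookkeeping; part (2) follows from your invariance (i) (preservation of $\gtp(\cdot/\emptyset)$ under $\K$-embeddings, witnessed by $(f, \id_N)$ in $\Eat$), which gives that canonical expansions nest along $\lea$ and that unions of $\leap{\bigK}$-chains are again canonical expansions; and in part (3) you put the weight exactly where it belongs, namely on the converse of your invariance (ii). The relabelling repair there is right: since the $\Eat$-witnesses are injective, the common image $\ba'$ of $\ba$ has the same equality pattern as $\ba$, so an isomorphic copy $N^*$ of the amalgam carrying $\ba'$ back to $\ba$ (with the rest of $|N|$ sent to fresh elements) exists, $K$ is closed under isomorphism, and the repaired maps fix $A$ pointwise while still identifying the images of $\bb_1$ and $\bb_2$; one must also relabel each \emph{intermediate} triple of the $E$-chain so its copy of $\ba$ becomes literally $\ba$ (the parameter sets in $\Eat$ must be equal), which your ``propagate along the transitive-closure chain'' covers, if tersely. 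The single-relation trick $R_q(\ba, \bx)$ with $q := \gtp(\ba\bb_1/\emptyset; N_1)$ for the finite-$A$ converse is exactly the argument of [3.16], and your observation that finiteness of $A$ is what makes $q$ a symbol of $\bigtau$ is the correct isolation of where the hypothesis enters.

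Two caveats, neither fatal. First, your biconditional ``$f$ is a $\K$-embedding $M \to N$ exactly when it is a $\bigtau$-embedding $\bigM \to \bigN$'' is false in general in the leftward direction: a $\bigtau$-embedding preserves all finite Galois types over $\emptyset$, but this need not imply $f[M] \lea N$ (e.g., in the AEC of sets with an equivalence relation all of whose classes are countably infinite, ordered by ``closed substructure with full classes'', the inclusion of a model with a properly shrunken class preserves all finite $\emptyset$-types but is not a $\K$-embedding). This does not damage your proof, because the morphisms of $\bigK$ are the $\leap{\bigK}$-embeddings, and $\leap{\bigK}$ is \emph{defined} via the reduct; the isomorphism of concrete categories needs only the true direction of your biconditional together with the nesting computation $R_p^{\bigM} = R_p^{\bigN} \cap \fct{<\omega}{|M|}$ for $M \lea N$, both of which you establish. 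You should simply replace ``$\bigtau$-embedding'' by ``$\bigK$-embedding'' there. Second, a small inaccuracy in part (3): atomic $\bigtau$-formulas are $R_p$ applied to $\tau(\K)$-\emph{terms} in $\bb$ and parameters from $A$, not merely to subtuples of $\bb A$; since $\K$-embeddings are $\tau(\K)$-embeddings and hence commute with terms, your argument goes through verbatim with $\bc_\ell$ taken to be the tuples of term values.
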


We have arrived to the definition of the correspondence between quasiminimal AECs and quasiminimal pregeometry classes, and the proof that it works:

\begin{defin}
  For $\K$, a quasiminimal AEC let $\C (\K)$ be the class $\{(M, \cl^M) \mid M \in \bigK\}$, where $\bigK$ is the $(<\aleph_0)$-Galois Morleyization of $\K$.
\end{defin}

\begin{thm}\label{aec-qm}
  If $\K$ is a quasiminimal AEC, then $\C (\K)$ is a quasiminimal pregeometry class, which is unbounded if and only if $\K$ is. Moreover $\K (\C (\K))$ is the $(<\aleph_0)$-Galois Morleyization of $\K$.
\end{thm}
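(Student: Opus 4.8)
The plan is to verify each axiom of a quasiminimal pregeometry class for $\C(\K)$, leveraging the machinery already assembled. First I would fix notation: write $\bigK$ for the $(<\aleph_0)$-Galois Morleyization of $\K$, and recall from Fact \ref{morleyization-facts} that $\bigK$ is a functorial expansion of $\K$, hence an AEC with $\LS(\bigK) = \aleph_0$ (using that $\K$ is $\aleph_0$-stable by Lemma \ref{stable-lem}, so $|\gS^{<\omega}(\emptyset)| \le \aleph_0$, giving $|\tau(\bigK)| \le \aleph_0$). Because the reduct map is an isomorphism of concrete categories, the closure operators agree: $\cl^{\bigM} = \cl^M$ for $M \in \K$, and $\bigK$ admits intersections with $\bigK$-substructures corresponding exactly to $\K$-substructures.

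Next I would dispatch the easy axioms. Axiom 0(\ref{0-1}) follows from $|\tau(\bigK)| \le \aleph_0$ and $\K \neq \emptyset$; axiom 0(\ref{0-2}) is the isomorphism-invariance built into any AEC; axiom 0(\ref{0-3}) reduces to the claim that any two members of $\bigK$ satisfy the same quantifier-free sentences, which by Fact \ref{morleyization-facts}(\ref{morleyization-3}) amounts to agreement of empty-domain finite Galois types --- this holds because $\K$ has joint embedding in $\aleph_0$ (Lemma \ref{ap-lem}), so there is a single type $R_p$ realized, forcing the quantifier-free sentences to match. Axiom I(\ref{i-1}) is precisely Theorem \ref{pregeom-thm} (the closure operator is a pregeometry with finite character) transported across the functorial expansion. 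Axioms I(\ref{i-2}) and IV (closure under the relevant substructures and unions of chains) follow from $\bigK$ being an AEC admitting intersections, where the closed sets are the $\bigK$-substructures. Axiom I(\ref{i-3}) is the statement that partial $\bigtau$-embeddings preserve closure, which follows from Fact \ref{gtp-eq} together with the fact that a partial $\bigtau$-embedding preserves finite Galois types and hence respects $\cl^M = \cl^{\bigM}$.

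The substantive work is axiom II. For II(\ref{ii-1}), given countable closed $G, G'$ with an isomorphism $g: G \cong G'$ and elements $x, x'$ independent from $G, G'$ respectively, I must show $g \cup \{(x,x')\}$ is a partial $\bigtau$-embedding. Since $g$ is a $\bigtau$-isomorphism it induces $\gtp(x / G; \cdot)$ and $\gtp(x' / G'; \cdot)$ as nonalgebraic types over countable models; by uniqueness of the generic type (axiom (\ref{qm-4})) and Fact \ref{gtp-eq} these match after applying $g$, which gives that the map is a partial embedding in $\bigtau$ by Fact \ref{morleyization-facts}(\ref{morleyization-3}). The genuinely hard axiom --- and the main obstacle --- is II(\ref{ii-2}), which demands a form of $\aleph_0$-homogeneity over arbitrary countable closed $G$: extending a partial embedding $g \cup f$ (with $f$ of finite preimage $X$) to cover an element $y \in \cl_H(X \cup G)$. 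I would handle this exactly as the proof sketch before Lemma \ref{hom-lem} indicates: combine Lemma \ref{hom-lem} (homogeneity for finite Galois types over $\emptyset$, which transfers to the Morleyization where these become quantifier-free types) with the non-splitting result of Fact \ref{ns-fact}. Concretely, Fact \ref{ns-fact} gives that every finite Galois type over the countable model $G$ does not split over a finite subset; this lets one lift agreement of types over a finite set to agreement over all of $G$, which is precisely the input that \cite[5.3]{quasimin-five} shows suffices for II(\ref{ii-2}).

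Finally I would verify the two trailing claims. Unboundedness of $\C(\K)$ is equivalent to the existence of a strictly increasing $\omega$-chain in $\K$ (axiom (\ref{qm-5})) because an infinite-dimensional model in $\C(\K)$ yields, via independence, such a chain and conversely; this is the same equivalence already recorded in Theorem \ref{qm-aec}. The ``moreover'' clause, that $\K(\C(\K))$ equals the $(<\aleph_0)$-Galois Morleyization $\bigK$, follows by unwinding definitions: $\K(\C(\K))$ has as universe the set of reducts-with-closure appearing in $\C(\K)$, which by Lemma \ref{uq-cl} carry a unique closure operator equal to $\cl^M$, and the ordering $\leap{\C(\K)}$ defined by ``being a closed substructure'' coincides with $\leap{\bigK}$ precisely because closed sets in $(|M|, \cl^M)$ are the $\bigK$-substructures; thus the two AECs have the same underlying class and the same ordering, hence are equal.
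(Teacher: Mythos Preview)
Your proposal is correct and follows essentially the same approach as the paper: verify each axiom of Definition \ref{quasimin-class-def} using the same toolkit (Theorem \ref{pregeom-thm} for I(\ref{i-1}), the functorial expansion and Fact \ref{morleyization-facts}(\ref{morleyization-3}) for the type-theoretic axioms, and the combination of Lemma \ref{hom-lem}, Fact \ref{ns-fact}, and \cite[5.3]{quasimin-five} for the hard axiom II(\ref{ii-2})). The only minor omissions are that you do not explicitly note the countability of closures of finite sets in I(\ref{i-1}) (immediate from $\LS(\K)=\aleph_0$) and that your handling of I(\ref{i-3}) glosses over the finite-character reduction the paper makes explicit, but these are cosmetic.
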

\begin{proof}
  Let $\C := \C (\K)$. It is clear that the elements of $\C$ are of the right form. The moreover part is clear from the definition of $\K (\C (\K))$, and so is the equivalence between the two versions of being bounded. We check all the conditions of Definition \ref{quasimin-class-def}. We will use without comments that  $\K_{\le \aleph_0}$ is has amalgamation and joint embedding (Lemma \ref{ap-lem}) and is stable in $\aleph_0$ (Lemma \ref{stable-lem}).

    \begin{itemize}
    \item[0: ]
    \begin{enumerate}
    \item Since $\K$ has a prime model (axiom (\ref{qm-2}) in Definition \ref{quasimin-def}), $\C \neq \emptyset$. By Fact \ref{morleyization-facts}, $|\tau (\C)| = |\tau (\bigK)| \le |\gS^{<\omega} (\emptyset)| + |\tau (\K)|$. Since $\LS (\K) = \aleph_0$, we have that $|\tau (\K)| \le \aleph_0$. Using that $\K \neq \emptyset$, pick $M \in \K_{\le \aleph_0}$. Since $\K_{\le \aleph_0}$ has amalgamation and joint embedding, there is an injection from $\gS^{<\omega} (\emptyset)$ into $\gS^{<\omega} (M)$. By amalgamation and stability, there exists $M' \in \K_{\aleph_0}$ universal over $M$. Therefore $|\gS^{<\omega} (M)| \le \aleph_0$. Thus $|\tau (\bigK)| \le \aleph_0$, as desired.
    \item This is clear. In fact, if $f: M \cong N$, then by definition of $\cl^M$ and $\cl^N$, $f$ is automatically an isomorphism from $(|M|, \cl^M)$ onto $(|N|, \cl^N)$.
    \item Let $(M, \cl^M), (N, \cl^N) \in \C$. If $M_0 \leap{\bigK} M$, then $M_0 \subseteq M$ so $M_0$ and $M$ satisfy the same quantifier-free sentences, so without loss of generality $M$ and $N$ are already countable. Now use that $\K_{\le \aleph_0}$ has joint embedding (by Lemma \ref{ap-lem}).
    \end{enumerate}
  \item[I: ]
    \begin{enumerate}
    \item Let $(M, \cl^M) \in \C$. By Theorem \ref{pregeom-thm}, $(M, \cl^M)$ is a pregeometry. Moreover if $A \subseteq |M|$ is finite then $|\cl^M (A)| \le \LS (\K) = \aleph_0$, as desired.
    \item Let $(M, \cl^M) \in \C$ and $X \subseteq |M|$. By definition of admitting intersections, $\cl^M (X) \lea M$ and so the result follows.
    \item Let  $(M, \cl^M), (M', \cl^{M'}) \in \C$, $X \subseteq |M|$, $y \in \cl^M (X)$, and $f: M \rightharpoonup M'$ be a partial embedding with $X \cup \{y\} \subseteq \preim (f)$. We want to see that $f (y) \in \cl^{M'} (f[X])$. By finite character, we may assume without loss of generality that $X$ is finite and therefore $\preim (f)$ is also finite. Let $\ba$ be an enumeration of $X$. Since quantifier-free types and Galois types over finite sets coincide in $\bigK$ (Fact \ref{morleyization-facts}(\ref{morleyization-3})), $\gtp (\ba y / \emptyset; M) = \gtp (f(\ba) f (y) / \emptyset; M')$. The result now follows from the definition of the closure operator.
    \end{enumerate}
  \item[II: ]
    Let $(H, \cl_{H}), (H', \cl_{H'}) \in \C$. Let $G \subseteq H$ and $G' \subseteq H'$ be countable closed subsets or empty and let $g: G \rightarrow G'$ be an isomorphism.
    \begin{enumerate}
    \item Let $x \in |H|$ and $x' \in |H'|$ be independent from $G$ and $G'$ respectively. We show that $g \cup \{(x, x')\}$ is a partial embedding. By renaming without loss of generality $G = G'$. By uniqueness of the generic type, $\gtp (x / G; H)  = \gtp (x' / G; H')$. The result follows, since Galois types are always finer than quantifier-free types (Fact \ref{morleyization-facts}(\ref{morleyization-3})).
    \item We use \cite[5.3]{quasimin-five}. It says that (assuming the axioms of quasiminimal pregeometry classes that we have established already) II(\ref{ii-2}) follows from the conclusion of \cite[2.2]{quasimin-five} and \cite[4.2]{quasimin-five}. Here, the first is proven as Lemma \ref{hom-lem} and the second as Fact \ref{ns-fact} (recalling that finite Galois types and quantifier-free types coincide, Fact \ref{morleyization-facts}(\ref{morleyization-3})).
    \end{enumerate}
  \item[IV: ] 
    \begin{enumerate}
    \item Because $\K$ is an AEC and the closure operator has finite character.
    \end{enumerate}
  \end{itemize}
\end{proof}

As a corollary of Theorem \ref{aec-qm}, all the work on structural properties of quasiminimal pregeometry classes automatically applies also to quasiminimal AECs:

\begin{cor}\label{final-cor}
  Let $\K$ be a quasiminimal AEC.
  \begin{enumerate}
  \item\label{final-cor-0} $\K$ is $(<\aleph_0)$-tame for types of finite length (that is, for any two distinct $p, q \in \gS^{<\omega} (M)$, there exists a finite $A \subseteq |M|$ such that $p \rest A \neq q \rest A$).
  \item\label{final-cor-1} Let $M, N \in \K$ and let $B_M, B_N$ be bases for $(|M|, \cl^M)$ and $(|N|, \cl^N)$ respectively. If $f$ is a bijection from $B_M$ onto $B_N$, then there exists an isomorphism $g: M \cong N$ with $f \subseteq g$.
  \item\label{final-cor-2} If $\K$ is unbounded, then:
    \begin{enumerate}
      \item $\K$ has no maximal models.
      \item $\K$ has exactly $\aleph_0$ non-isomorphic countable models and $\K$ is categorical in every uncountable cardinal.
    \end{enumerate}
  \end{enumerate}
\end{cor}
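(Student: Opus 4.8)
The plan is to leverage the correspondence established in Theorem \ref{aec-qm}: $\C (\K)$ is a quasiminimal pregeometry class and $\K (\C (\K))$ is exactly the $(<\aleph_0)$-Galois Morleyization $\bigK$ of $\K$. Since $\bigK$ is a functorial expansion of $\K$ (Fact \ref{morleyization-facts}(2)), the reduct map is an isomorphism of concrete categories; crucially it preserves the underlying universe (so $\|M\| = \|\bigM\|$), the closure operator $\cl^M$, and all $\lea$-embeddings and isomorphisms. Thus every structural statement about $\C (\K)$ established in the literature on quasiminimal pregeometry classes (Zilber \cite{zil05}, Kirby \cite{quasimin}) transfers to $\K$ by passing through $\bigK$. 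This reduces (\ref{final-cor-1}) and (\ref{final-cor-2}) to citing known results.

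For (\ref{final-cor-1}), I would invoke the fact (implicit in Zilber's and Kirby's categoricity arguments) that in a quasiminimal pregeometry class any bijection between bases of two models extends to an isomorphism. Since the pregeometry $(|M|, \cl^M)$ is literally the same object in $\K$, in $\bigK$, and in $\C (\K)$, a basis $B_M$ for $M$ computed in $\K$ is a basis for $\bigM$ in $\C (\K)$; applying the cited result in $\C (\K)$ yields an isomorphism $\hatg : \bigM \cong \bigN$ extending $f$, and its reduct $g := \hatg \rest \tau (\K)$ is the desired isomorphism in $\K$. For (\ref{final-cor-2}), Zilber's theorem gives that an unbounded quasiminimal pregeometry class is categorical in every uncountable cardinal and that its models are determined by dimension; since the dimensions of countable models range exactly over $0, 1, 2, \ldots, \aleph_0$, there are exactly $\aleph_0$ countable models up to isomorphism, and there is no maximal model because dimension can always be increased. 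All of these transfer to $\K$ because models of $\K$ and of $\C (\K)$ of a given cardinality correspond bijectively (same universe) under the reduct.

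For (\ref{final-cor-0}), tameness, I would argue directly inside $\K$ using Fact \ref{ns-fact}. Given distinct $p, q \in \gS^{<\omega} (M)$, Fact \ref{ns-fact} provides finite sets $A_p, A_q \subseteq |M|$ over which $p$ and $q$ respectively do not split; set $A := A_p \cup A_q$. The goal is to show $p \rest A \neq q \rest A$, for which I would prove the contrapositive: if $p \rest A = q \rest A$, then $p = q$. Here I would use that $p$ and $q$ do not split over the common finite set $A$ together with amalgamation and stability in $\aleph_0$ (Lemmas \ref{ap-lem} and \ref{stable-lem}) and uniqueness of the generic type to run a back-and-forth argument producing an isomorphism witnessing $p = q$ via the characterization of Galois types in Fact \ref{gtp-eq}; reducing to a countable $M$ containing $A$ (using the finite character of $\cl^M$ and the Löwenheim--Skolem--Tarski axiom) is what makes the amalgamation available. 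Alternatively, since finite Galois types coincide with quantifier-free types in $\bigK$ (Fact \ref{morleyization-facts}(\ref{morleyization-3})) and quantifier-free types are determined by their finite restrictions, the argument can be phrased entirely as a statement about $\bigK$.

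The main obstacle is (\ref{final-cor-0}): the transfer arguments for (\ref{final-cor-1}) and (\ref{final-cor-2}) are bookkeeping once the category isomorphism of Theorem \ref{aec-qm} is in hand, but deducing tameness from non-splitting requires the uniqueness (stationarity) of the non-splitting extension of a type over a finite base, which is exactly the step that must be argued carefully rather than quoted; the reduction to countable models via amalgamation in $\aleph_0$ is what makes this feasible.
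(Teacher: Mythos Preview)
Your treatment of (\ref{final-cor-1}) and (\ref{final-cor-2}) matches the paper exactly: both you and the paper invoke Theorem \ref{aec-qm} to pass to the quasiminimal pregeometry class $\C(\K)$, cite Zilber's and Kirby's structural results there, and transfer back along the functorial expansion $\bigK \to \K$.

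The divergence is in (\ref{final-cor-0}). The paper does \emph{not} argue tameness directly; it obtains it by the very same transfer mechanism, citing Zilber's work (together with the excellence theorem of \cite{quasimin-five}) to conclude that $(<\aleph_0)$-tameness holds for $\C(\K)$, hence for $\bigK$, hence for $\K$. Your primary route --- non-splitting over a finite $A$ via Fact \ref{ns-fact}, then stationarity --- is a genuinely different and harder argument, and you correctly identify the obstacle: uniqueness of the non-splitting extension over a finite base is exactly what is not available off the shelf here (the usual stationarity lemmas want the base to be a model, not a finite set). This gap is real, and the paper sidesteps it entirely.

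Your ``alternatively'' is closer in spirit to the paper, but as phrased it is not yet a proof. Saying that quantifier-free types are determined by their finite restrictions is trivially true; what is needed is that \emph{Galois} types over an arbitrary (possibly infinite) $M$ in $\bigK$ coincide with quantifier-free types, so that the trivial local character of the latter yields tameness for the former. Fact \ref{morleyization-facts}(\ref{morleyization-3}) only gives this coincidence over \emph{finite} sets. The extension to all sets is precisely the content supplied by Zilber's back-and-forth plus the excellence result of \cite{quasimin-five}, which is what the paper cites. So the cleanest fix to your proposal is simply to handle (\ref{final-cor-0}) the same way you handle (\ref{final-cor-1}) and (\ref{final-cor-2}).
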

\begin{proof}
  Let $\C := \C (\K)$. By Theorem \ref{aec-qm}, $\C$ is a quasiminimal pregeometry class. By \cite{quasimin-five}, it also satisfies the excellence axiom. By Zilber's main result on these classes \cite{zil05} (or see \cite{quasimin} for an exposition), (\ref{final-cor-0}) and (\ref{final-cor-1}) hold for $\C$. Therefore they also hold for $\K (\C)$, which is a just a functorial expansion of $\K$. Hence they also hold for $\K$. Similarly, (\ref{final-cor-2}) holds in unbounded quasiminimal AECs (see \cite[\S4]{quasimin}).
\end{proof}

\bibliographystyle{amsalpha}
\bibliography{quasimin}

\newcommand{\etalchar}[1]{$^{#1}$}
\providecommand{\bysame}{\leavevmode\hbox to3em{\hrulefill}\thinspace}
\providecommand{\MR}{\relax\ifhmode\unskip\space\fi MR }
\providecommand{\MRhref}[2]{%
  \href{http://www.ams.org/mathscinet-getitem?mr=#1}{#2}
}
\providecommand{\href}[2]{#2}
\begin{thebibliography}{BHH{\etalchar{+}}14}

\bibitem[Bal09]{baldwinbook09}
John~T. Baldwin, \emph{Categoricity}, University Lecture Series, vol.~50,
  American Mathematical Society, 2009.

\bibitem[BHH{\etalchar{+}}14]{quasimin-five}
Martin Bays, Bradd Hart, Tapani Hyttinen, Meeri Kes{\"a}l{\"a}, and Jonathan
  Kirby, \emph{Quasiminimal structures and excellence}, Bulletin of the London
  Mathematical Society \textbf{46} (2014), no.~1, 155--163.

\bibitem[BS08]{non-locality}
John~T. Baldwin and Saharon Shelah, \emph{Examples of non-locality}, The
  Journal of Symbolic Logic \textbf{73} (2008), 765--782.

\bibitem[CR70]{crapo-rota-geometries}
Henry~H. Crapo and Gian-Carlo Rota, \emph{On the foundations of combinatorial
  geometry: combinatorial geometries}, preliminary ed., MIT Press, 1970.

\bibitem[EJ85]{theory-convex-geometries}
Paul~H. Edelman and Robert~E. Jamison, \emph{The theory of convex geometries},
  Geometriae Dedicata \textbf{19} (1985), 247--270.

\bibitem[Gro]{grossbergbook}
Rami Grossberg, \emph{A course in model theory {I}}, A book in preparation.

\bibitem[Hay16]{hayk2013-jsl}
Levon Haykazyan, \emph{Categoricity in quasiminimal pregeometry classes}, The
  Journal of Symbolic Logic \textbf{81} (2016), no.~1, 56--64.

\bibitem[HK16]{group-config-kangas-apal}
Tapani Hyttinen and Kaisa Kangas, \emph{Quasiminimal structures, groups and
  {Z}ariski-like geometries}, Annals of Pure and Applied Logic \textbf{167}
  (2016), no.~6, 457--505.

\bibitem[ITW04]{itai-tsuboi-wakai}
Masanori Itai, Akito Tsuboi, and Kentaro Wakai, \emph{Construction of saturated
  quasi-minimal structure}, The Journal of Symbolic Logic \textbf{69} (2004),
  no.~1, 9--22.

\bibitem[Kir10]{quasimin}
Jonathan Kirby, \emph{On quasiminimal excellent classes}, The Journal of
  Symbolic Logic \textbf{75} (2010), no.~2, 551--564.

\bibitem[Pil96]{pillay-geometric-stability}
Anand Pillay, \emph{Geometric stability theory}, Oxford Logic Guides, Clarendon
  Press, 1996.

\bibitem[PT11]{pillay-tanovic}
Anand Pillay and Predrag Tanovi{\'c}, \emph{Generic stability, regularity, and
  quasi-minimality}, Models, logics, and higher-dimensional categories (Bradd
  Hart, Thomas~G. Kucera, Anand Pillay, Philip~J. Scott, and Robert A.~G.
  Seely., eds.), American Mathematical Society, 2011, pp.~189--211.

\bibitem[She87]{sh88}
Saharon Shelah, \emph{Classification of non elementary classes {II}. {A}bstract
  elementary classes}, Classification Theory (Chicago, IL, 1985) (John~T.
  Baldwin, ed.), Lecture Notes in Mathematics, vol. 1292, Springer-Verlag,
  1987, pp.~419--497.

\bibitem[She09]{shelahaecbook}
\bysame, \emph{Classification theory for abstract elementary classes}, Studies
  in Logic: Mathematical logic and foundations, vol.~18, College Publications,
  2009.

\bibitem[Vas16]{sv-infinitary-stability-afml}
Sebastien Vasey, \emph{Infinitary stability theory}, Archive for Mathematical
  Logic \textbf{55} (2016), 567--592.

\bibitem[Vas17]{ap-universal-apal}
\bysame, \emph{Shelah's eventual categoricity conjecture in universal classes:
  part {I}}, Annals of Pure and Applied Logic \textbf{168} (2017), no.~9,
  1609--1642.

\bibitem[Zil]{elements-geometric}
Boris Zilber, \emph{Elements of geometric stability theory}, online notes. URL:
  \url{https://people.maths.ox.ac.uk/zilber/est.pdf}.

\bibitem[Zil05a]{zil05}
\bysame, \emph{A categoricity theorem for quasi-minimal excellent classes},
  Logic and its applications (Andreas Blass and Yi~Zhang, eds.), Contemporary
  Mathematics, American Mathematical Society, 2005, pp.~297--306.

\bibitem[Zil05b]{zilber-pseudoexp}
\bysame, \emph{Pseudo-exponentiation on algebraically closed fields of
  characteristic zero}, Annals of Pure and Applied Logic \textbf{132} (2005),
  67--95.

\end{thebibliography}

\end{document}